\theoremstyle{plain}
\newtheorem{theorem}{Theorem}[section]
\newtheorem{lemma}[theorem]{Lemma}
\newtheorem{corollary}[theorem]{Corollary}
\theoremstyle{definition}
\newtheorem{definition}[theorem]{Definition}
\newtheorem*{properties*}{Properties}
\newenvironment{definition*}[1][Definition]{\begin{trivlist}
\item[\hskip \labelsep {\bfseries #1}]}{\end{trivlist}}
\numberwithin{equation}{section}
\DeclareMathOperator{\cyc}{cyc}
\DeclareMathOperator{\Cyc}{Cyc}
\DeclareMathOperator{\Rmil}{Rl-min}
\DeclareMathOperator{\sor}{sor}
\DeclareMathOperator{\maj}{maj}
\DeclareMathOperator{\inv}{inv}
\DeclareMathOperator{\rmil}{rl-min}
\newcommand{\rmilb}{\rmil_{B}}
\DeclareMathOperator{\fmaj}{fmaj}
\DeclareMathOperator{\rmaj}{rmaj}
\DeclareMathOperator{\Bmap}{Bmap}
\DeclareMathOperator{\none}{n_1}
\DeclareMathOperator{\ntwo}{n_2}
\newcommand{\invb}{\inv_{B}}
\newcommand{\invd}{\inv_{D}}
\DeclareMathOperator{\Btmax}{Bt-max} 
\newcommand{\Btmaxb}{\Btmax_{B}}
\newcommand{\Btmaxd}{\Btmax_{D}}
\DeclareMathOperator{\Rlmin}{Rl-min}
\newcommand{\Rlminb}{\Rlmin_{B}}
\newcommand{\Rlmind}{\Rlmin_{D}}
\DeclareMathOperator{\sorb}{sor_B}
\newcommand{\Cycb}{\Cyc_{B}}
\DeclareMathOperator{\Cbtmax}{Cbt-max}
\DeclareMathOperator{\Des}{Des}
\DeclareMathOperator{\acode}{A-code}
\DeclareMathOperator{\bcode}{B-code}
\DeclareMathOperator{\mcode}{M-code}
\newcommand{\Desb}{\Des_{B}}
\newcommand{\majb}{\maj_{B}}
\DeclareMathOperator{\p}{p}
\newcommand{\Cbtmaxb}{\Cbtmax_{B}}
\DeclareMathOperator{\sgn}{sgn}
\begin{document}
\title{Sorting Index and Mahonian-Stirling Pairs for Labeled Forests}
\author{
Amy Grady and Svetlana Poznanovi\'c$^1$  \\ [6pt]
Department of Mathematical Sciences\\
Clemson University, Clemson, SC 29634  \\
}
\date{}
\maketitle
\begin{abstract} Bj\"orner and Wachs defined a major index for labeled plane forests and  showed that it has the same distribution as the number of inversions. We define and study the distributions of a few other natural statistics on labeled forests. Specifically, we introduce the notions of bottom-to-top maxima, cyclic bottom-to-top maxima, sorting index, and cycle minima.  Then we show that the pairs $(\inv, \Btmax)$, $(\sor, \Cyc)$, and $(\maj, \Cbtmax)$ are equidistributed. Our results extend the result of Bj\"orner and Wachs and generalize results for permutations. We also introduce analogous statistics for signed labeled forests and show equidistribution results which generalize results for signed permutations.

\end{abstract} 



{\renewcommand{\thefootnote}{} \footnote{\emph{E-mail addresses}:
agrady@clemson.edu (A. Grady), spoznan@clemson.edu (S.~Poznanovi\'c)}

\footnotetext[1]{The second author is partially supported by the NSF grant DMS-1312817.}


\section{Introduction} \label{S:introduction}

Let $F$ be a plane forest with a vertex set $V(F) = \{ v_1, \dots, v_n\}$.  We will draw $F$ with the roots on top and think of it as a Hasse diagram of the poset $(V(F), <_F)$. Throughout this paper, we will assume that the vertices  of $F$ are naturally indexed. That is, if $v_i <_F v_j$, then $i< j$.

A labeling $w$ of $F$ is a bijection
\[w: V(F) \rightarrow \{1, 2, \dots, n\}.\]  Let $\mathcal{W}(F)$ be the set of all labelings of a forest $F$.
For each vertex $v \in V(F)$, we will denote by $h_v$ the number of vertices of the subtree of $F$ rooted at $v$. In other words, $h_v$ is the size of the principal order ideal generated by $v$. The inversion number of a labeled forest $(F,w)$ is defined as 
\[ \mathrm{inv}(F,w) = \# \{(u,v) : u <_F v, w(u) > w(v)\}.\]
If the forest $F$ is a linear tree this is simply the inversion index of the corresponding permutation obtained by reading the labels of $F$ from bottom to top. The inversion index was generalized from permutations to trees by Mallows and Riordan~\cite{MR} who showed that the inversion enumerator for unordered labeled trees has  interesting properties. In this paper, we will be considering the inversion enumerator for all labelings of a fixed forest $F$. 

Bj\"orner and Wachs~\cite{BW1} extended another classical permutation statistic, the major index, to labeled forests. Namely, they defined the descent set of a labeled forest as 
\[ \Des(F,w) = \{v \in V(F) : w(v) > w(u), u \text{  is the parent of } v\},\]
the major index as
\[ \maj(F,w)= \sum_{v \in \Des(F,w)} h_v\]
and showed that the major index has the same distribution as the inversion index on labeled forests of fixed shape (see~\cite{LW} for a bijective proof): 
\[  \sum_{w\in\mathcal{W}(F)} q^{\maj(F,w)}=\sum_{w\in\mathcal{W}(F)} q^{\inv(F,w)}= \frac{n!}{\prod_{v \in V(F)}h_v} \prod_{v \in V(F)} [h_v] .\]
Here and throughout the text we  use $[n]$ to denote the $q$-integer $1+q+q^2+ \cdots +q^{n-1}$.

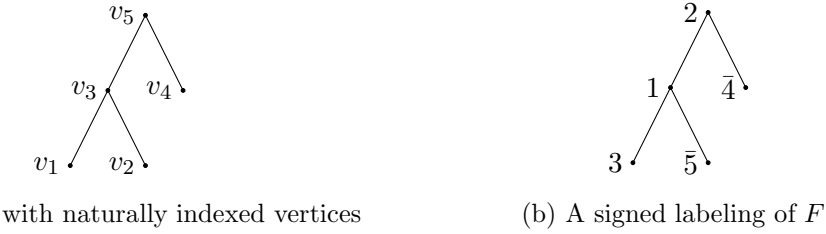
\begin{figure}[ht]
\centering
\begin{subfigure}[b]{0.45\textwidth}
\centering
\begin{tikzpicture} [scale = 0.5]
\draw (0,0) -- (1,2) -- (2,4) -- (3,2);
\draw (1,2) -- (2,0);
\draw [fill] (0,0) circle [radius=0.05];
\node [left] at (0,0) {$v_1$};
\draw [fill] (1,2) circle [radius=0.05];
\node [left] at (1,2) {$v_3$};
\draw [fill] (2,4) circle [radius=0.05];
\node [left] at (2,4) {$v_5$};
\draw [fill] (3,2) circle [radius=0.05];
\node [left] at (3,2) {$v_4$};
\draw [fill] (2,0) circle [radius=0.05];
\node [left] at (2,0) {$v_2$};
\end{tikzpicture}

\caption{A tree $F$ with naturally indexed vertices}
\label{fig:vertices}
\end{subfigure}
\begin{subfigure}[b]{0.45\textwidth}
\centering
\begin{tikzpicture} [scale = 0.5]
\draw (0,0) -- (1,2) -- (2,4) -- (3,2);
\draw (1,2) -- (2,0);
\draw [fill] (0,0) circle [radius=0.05];
\node [left] at (0,0) {3};
\draw [fill] (1,2) circle [radius=0.05];
\node [left] at (1,2) {1};
\draw [fill] (2,4) circle [radius=0.05];
\node [left] at (2,4) {2};
\draw [fill] (3,2) circle [radius=0.05];
\node [left] at (3,2) {$\bar{4}$};
\draw [fill] (2,0) circle [radius=0.05];
\node [left] at (2,0) {$\bar{5}$};

\end{tikzpicture}
\caption{A signed labeling of $F$}
\label{fig:almostmoon}
\end{subfigure}
\caption{A forest $F$ with a signed labeling}
\label{sgnl}
\end{figure}

A signed labeling of the forest $F$ of size $n$ is a one-to-one map
\[ w: V(F) \rightarrow \{\pm1, \pm2, \dots, \pm n\}\] such that  if $i \in w(V(F))$ then $-i \notin w(V(F))$ (see Figure~\ref{sgnl}). The set of all signed labelings of $F$ will be denoted by $\mathcal{W}_B(F)$. Chen et al.~\cite{CGG} extended the notion of inversions and major index to signed labeled forests, the latter one in two different ways. The inversion number $\inv_B$ for signed labelings is motivated by the length function for signed permutations, while the major indices $\fmaj$ and $\rmaj$ are based on the major indices of signed permutations introduced by Adin and Roichman~\cite{AR} and Reiner~\cite{Reiner}, respectively. The authors in~\cite{CGG} showed that
\[  \sum_{w \in\mathcal{W}_B(F)} q^{\fmaj(F,w)}= \sum_{w \in\mathcal{W}_B(F)} q^{\rmaj(F,w)}=\sum_{w \in\mathcal{W}_B(F)} q^{\inv_B(F,w)}= \frac{n!}{\prod_{v \in V(F)}h_v} \prod_{v \in V(F)} [2h_v] .\]

The inversion and major indices are so-called Mahonian statistics. Another permutation statistic from the same family is the sorting index~\cite{Petersen}. For a permutation $\sigma \in S_n$ there is a unique decomposition as a product of transpositions,  $\sigma = (i_1,j_1)(i_2,j_2) \cdots (i_k,j_k)$, such that $j_1<j_2<\cdots <j_k$ and $i_1<j_1, i_2<j_2,  \ldots , i_k<j_k$.  The sorting index is defined by $\sor(\sigma) = \sum_{r=1}^k (j_r-i_r)$.   The desired transposition decomposition can be found using the Straight Selection Sort algorithm. The algorithm first places $n$ in the $n$-th position by applying a transposition, then places $n-1$ in the $(n-1)$-st position by applying a transposition, etc. For example, consider the permutation $\sigma = 2413576$.  We have \[2413576 \overset{(67)} \rightarrow 2413567 \overset{(24)} \rightarrow 2314567 \overset{(23)} \rightarrow 2134567 \overset{(12)} \rightarrow 1234567.\] Therefore, $\sor(\sigma) = (2-1)+(3-2)+(4-2)+(7-6) = 5$. One of the goals of this paper is to generalize the notion of sorting index to labeled forests. 

In fact, we will be considering pairs of Mahonian and Stirling statistics, i.e, permutation statistics whose distribution is governed by the unsigned Stirling numbers of the first kind. Two well-known Stirling statistic are the number of cycles, $\cyc$, and the number of \textbf{r}ight-to-left \textbf{mi}nimum letters, $\rmil$. It is well known that
\[ \sum_{\sigma \in S_n} t^{\cyc(\sigma)} = \sum_{\sigma \in S_n} t^{\rmil(\sigma)} = \prod_{k=0}^{n-1} (t+k).\]


The motivation for this paper was  to extend some results for Mahonian-Stirling pairs on permutations to (signed) labeled forests. The pairs we consider are $(\inv, \Btmax)$, $(\maj, \Cbtmax)$, and $(\sor, \Cyc)$. We use capital letters to denote set-valued statistics. So, for example, $\Bmap$ is the set of bottom-to-top maximum positions. Precise definitions of the statistics $\Btmax$, $\Cbtmax$ (cyclic bottom-to-top maximum positions), $\sor$ (sorting index), and $\Cyc$ (minimal elements in cycles) will be given below. Our main result is that these three pairs of statistics are equidistributed over all (signed) labelings of a forest $F$. We give a bijective proof of this fact by mapping the labelings to certain integer sequences in three different ways. This also gives us an explicit formula for the generating function of each of the three pairs. Explicitly, we prove that
\begin{align*} \sum_{w \in \mathcal{W}(F)} q^{\inv(F,w)} \prod_{v \in \Btmax(F,w)} t_v = \sum_{w \in \mathcal{W}(F)} q^{\sor(F,w)} \prod_{v \in \Cyc(F,w)} t_v &= \sum_{w \in \mathcal{W}(F)} q^{\maj(F,w)} \prod_{v \in \Cbtmax(F,w)} t_v \\ &= \frac {n!}{\prod_{v \in V(F)} h_{v} } \prod _{v \in V(F)} \left([h_v] -1 +t_v\right), \end{align*} and
\begin{align*} \label{inv signed} \sum_{w \in \mathcal{W}_B(F)} q^{\invb(F,w)} \prod_{v \in \Btmaxb(F,w)} t_v &= \sum_{w \in \mathcal{W}_B(F)} q^{\sorb(F,w)} \prod_{v \in \Cycb(F,w)} t_v \\ &=  \frac {n!}{\prod_{v \in V(F)} h_{v} } \prod _{v \in V(F)} \left([2h_v] -1 +t_v\right). \end{align*}
When the forest is a linear tree, we show how these statistics specialize to known permutation statistics, and we discuss how our results are a generalization of some results for (signed) permutations.

The paper is organized as follows. Sections~\ref{S: inv},~\ref{S: sor}, and~\ref{S: maj}  deal with each of the pairs $(\inv, \Btmax)$, $(\maj, \Cbtmax)$, and $(\sor, \Cyc)$ separately. Whenever possible, we work with signed labeled forests and in our results we keep track of the negative signs in the labeling, so that the results for unsigned labeled forests are a corollary.  At places, we also discuss the case of even signed labeled forests, which is related to the case of even signed permutations. 


\section{Inversions and Bottom-to-Top Maxima}\label{S: inv}

Recall that a signed labeling of a forest $F$ is a one-to-one map $w: V(F) \rightarrow \{\pm1, \pm2, \dots, \pm n\}$ such that if $i \in w(V(F)) $ then $ -i \notin w(V(F))$.  As usual, we will denote $-i$ by $\bar{i}$.   A labeling is even-signed if the number of negative labels used is even. We will use $\mathcal{W}_B(F)$ and $\mathcal{W}_D(F)$ to denote the set of all signed labelings and the set of all even-signed labelings of a forest $F$, respectively. The type $B$ and type $D$ analogues of the inversion number of a labeled forest introduced by Bj\"orner and Wachs~\cite{BW1} was proposed by Chen et al.~\cite{CGG}. The definition is as follows. Let $\none(F,w)$ be the number of negative labels in $w$, and define \[\ntwo(F,w) = \#\{ (x,y) : x <_F y, w(x)+w(y)<0\}.\] Then the inversion number of a signed labeled forest is given by

\[\invb(F,w) =\inv(F,w) + \none(F,w) + \ntwo(F,w), \] while for $w \in W_D(F)$, the type $D$ inversion number is defined by
\[\invd(F,w)=\inv(F,w) + \ntwo(F,w).\]  For example, for the even signed labeled forest $(F,w)$ from Figure~\ref{sgnl}, $\invb(F,w) = 2 + 2+3 =7$ and $\invd(F,w) = 2 + 3 =5$. Note that if a signed labeling $w$ is in fact in $\mathcal{W}(F)$, then $\invb(F,w)=\inv(F,w)$. Chen et al.~\cite{CGG} showed that for a forest $F$ with $n$ vertices
\[\sum_{w\in\mathcal{W}_B(F)} p^{\none(F,w)} q^{\inv_B(F,w)}= \frac{n!}{\prod_{v \in V(F)}h_v} \prod_{v \in V(F)} (1+pq^{h_v})[h_v].\]
As a corollary, they obtained
\[\sum_{w\in\mathcal{W}_B(F)} q^{\inv_B(F,w)}= \frac{n!}{\prod_{v \in V(F)}h_v} \prod_{v \in V(F)} [2h_v]\]
and
\[\sum_{w\in\mathcal{W}_D(F)} q^{\inv_D(F,w)}= \frac{n!}{2\prod_{v \in V(F)}h_v} \prod_{v \in V(F)}(1+q^{h_v-1}) [h_v].\]
In this section we refine these results by looking at the joint distribution of inversions and bottom-to-top maxima.

\begin{definition} Let $F$ be a forest. For $w \in \mathcal{W}(F)$, we define the \emph{bottom-to-top maximum positions} to be
\[\Btmax(F,w) =  \{ v : w(v)>w(u) \text{ for all } u  <_F v\}.\]  For $w \in \mathcal{W}_B(F)$, we define the \emph{signed bottom-to-top maximum positions} to be
\[\Btmaxb(F,w) =  \{ v : w(v)>0 \text{ and } w(v)>|w(u)| \text{ for all } u  <_F v\}.\]  Finally, for $w \in \mathcal{W}_D(F)$, we define the even \emph{signed bottom-to-top maximum positions} to be
\[\Btmaxd(F,w) =  \{ v : v \text{ is not a leaf}, w(v)>0 \text{ and } w(v)>|w(u)| \text{ for all } u  <_F v\}.\]
\end{definition}

For example, for the signed labeled forest $(F,w)$ from Figure~\ref{sgnl}, $\Btmaxb(F,w) = \{v_1\}$ and $\Btmaxd(F,w) =\emptyset$. In this and following sections we will make use of maps between labeled forests and certain sequences that in the case of the symmetric group reduce to inversion tables. We will use $\mathrm{SE}_F$ and $\mathrm{SE}_F^B$ to denote the type A and type $B$ subexcedent sequences that correspond to a forest $F$, respectively: 
\[ \mathrm{SE}_F = \{ (a_1, \dots, a_n): a_i \in \mathbb{Z}, 0 \leq a_i \leq h_{v_i} -1 \},\]
\[ \mathrm{SE}_F^B = \{ (a_1, \dots, a_n): a_i \in \mathbb{Z}, 0 \leq a_i \leq 2h_{v_i} -1 \}.\]

For a forest $F$ and $w \in \mathcal{W}_B(F)$, we define its $\acode(F,w)$  to be the sequence $(a_1, \dots, a_n) \in \mathrm{SE}_F^B$ given by 
\[a_i = \# \{u :  u <_F v_i  \text{ and } w(u)>w(v_i)\} + \#\{u:  u <_F v_i  \text{ and } w(u)+w(v_i) <0 \} + \chi (w(v_i)<0),\] where $\chi$ is the truth indicator function.  
For example, the $\acode$ of $(F,w)$ from Figure~\ref{sgnl} is $(0,1,2,1,3)$.
\begin{lemma} \label{lem:B A-code info}
Let $w \in \mathcal{W}_B(F)$ and let $\acode(F,w)=(a_1,a_2,\dots,a_n)$. Then 
\begin{enumerate}
\item $\invb(F,w) = \sum _{i=1}^n a_i$ 
\item $\Btmaxb(F,w) = \{v_i : a_i = 0\}$
\item $w(v_i) < 0$ if and only if $h_{v_i}  \leq a_i \leq 2h_{v_i} -1$ and therefore $\none(F,w) = \#\{i: h_{v_i}  \leq a_i \leq 2h_{v_i} -1\}$.
\end{enumerate}
\end{lemma}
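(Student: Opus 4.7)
The plan is to prove all three parts by simply unpacking the definitions, with a small case analysis needed for part (3). For part (1), I would sum the three contributions to $a_i$ separately over $i=1,\dots,n$. Summing $\#\{u <_F v_i : w(u) > w(v_i)\}$ gives exactly $\inv(F,w)$ by definition; summing $\#\{u <_F v_i : w(u)+w(v_i)<0\}$ gives $\ntwo(F,w)$; and summing $\chi(w(v_i)<0)$ gives $\none(F,w)$. Combining these yields $\sum_i a_i = \inv(F,w) + \ntwo(F,w) + \none(F,w) = \invb(F,w)$.

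For part (2), I would observe that $a_i$ is a sum of three non-negative quantities, so $a_i=0$ if and only if each term vanishes. The third term forces $w(v_i)>0$; the first forces $w(u) < w(v_i)$ for every $u <_F v_i$ (using that labels are distinct); and the second forces $w(u) > -w(v_i)$ for every such $u$ (using that no two labels are negatives of one another). Together these say $|w(u)| < w(v_i)$ for all $u <_F v_i$ with $w(v_i)>0$, which is precisely the condition $v_i \in \Btmaxb(F,w)$.

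For part (3), I would split on the sign of $w(v_i)$. If $w(v_i) = k > 0$, the sets $\{u <_F v_i : w(u)>k\}$ and $\{u <_F v_i : w(u)<-k\}$ are disjoint, and the third term is $0$, so $a_i$ counts distinct vertices below $v_i$ and thus satisfies $0 \le a_i \le h_{v_i}-1$. If instead $w(v_i) = -k$ with $k>0$, then the two sets $A = \{u <_F v_i : w(u) > -k\}$ and $B = \{u <_F v_i : w(u) < k\}$ together cover all $u <_F v_i$ (since the labels $\neq -k$), while their intersection is $\{u <_F v_i : |w(u)|<k\}$, of some size $m$ with $0 \le m \le h_{v_i}-1$. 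Inclusion-exclusion then gives $\#A + \#B = (h_{v_i}-1) + m$, and adding the $+1$ from the indicator term produces $a_i = h_{v_i}+m \in \{h_{v_i},\dots,2h_{v_i}-1\}$. Combining the two cases shows $w(v_i)<0$ if and only if $h_{v_i} \le a_i \le 2h_{v_i}-1$, and summing the indicator over $i$ gives the formula for $\none(F,w)$. The only slightly subtle step is the inclusion-exclusion in the negative case, where one must notice that $A \cup B$ really is the entire set of vertices below $v_i$; the rest is straightforward bookkeeping.
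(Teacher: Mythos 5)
Your proposal is correct and follows essentially the same route as the paper: part (1) by summing the three contributions, part (2) by noting that $a_i=0$ forces all three nonnegative terms to vanish, and part (3) by the observation that the two sets are disjoint when $w(v_i)>0$ and cover all of $\{u : u<_F v_i\}$ when $w(v_i)<0$. Your explicit inclusion--exclusion in the negative case is just a slightly more detailed version of the paper's ``belongs to at least one of the two sets'' argument.
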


\begin{proof}
It is clear from the definition of the $\acode$ that $\invb(F,w) = \sum _{i=1}^n a_i$ because each $a_i$ is a sum of the amounts that the vertex $v_i$ contributes to $\inv(F,w)$, $\none(F,w)$, and $\ntwo(F,w)$.  Now, by definition, $v_i$ is a signed bottom-to-top  maximum position if and only if $w(v_i)>0$ and $w(v_i)>|w(u)|$ for all $u <_F v_i$.  Furthermore, $w(v_i)>|w(u)|$ for all $u<_F v_i$ if and only if $v_i$ does not create any inversions with vertices below it and $w(v_i)+w(u)>0$ for all $u <_F v_i$.  Therefore, $v_i$ is a signed bottom-to-top maximum position if and only if $a_i = 0$.  This proves the second part of the lemma. For the third part, note that if $w(v_i) > 0$ each vertex $u$ such that $u <_F v_i$ belongs to at most one of the sets  $\{v_j: v_j <_F v_i \text{ and } w(v_j) > w(v_i)\}$ and $\{v_j : v_j <_F v_i \text{ and } w(v_j)+w(v_i)<0\}$. Therefore, in this case $a_i < h_{v_i}$. On the other hand,  if $w(v_i) < 0$ each vertex $u$ such that $u <_F v_i$ belongs to at least one of these two sets and therefore $a_i \geq h_{v_i}$. 
\end{proof}

A labeling $w \in \mathcal{W}(F)$ is said to be natural if it preserves the order $<_F$. The $\acode$ is not a bijection between $\mathcal{W}_B(F)$ and $\mathrm{SE}_F^B$, but can be used to define the following bijection $\phi$ from $\mathcal{W}_B(F)$ to the set $\{ (w', (a_1, \dots,a_n)) :  w' \in \mathcal{W}(F) \text{ is a natural labeling  and } (a_1, \dots,a_n) \in \mathrm{SE}_F^B\}$. First we set $(a_1,\dots,a_n)$ to be the $\acode$ of $(F,w)$. The natural positive labeling $w'$ is obtained by a sequence of $n$ modifications applied to $w$ in the following way. Start with $w_n=w$. If the labeling $w_i$ has been defined for $i>0$, construct $w_{i-1}$  as follows.  Set $A_i = \{|w_i(u)| : u \leq_F v_i\}$.  Find the largest element in $A_i$, say it is $|w_i(v_j)|$, and define the new labeling $w_{i-1}$ of $F$ so that 
\begin{enumerate}
\item $w_{i-1}(v_i)=|w_i(v_j)|$ 
\item for all $u \nless_F v_i$, $w_{i-1}(u) = w_i(u)$ 
\item in $w_{i-1}$, the absolute values of the labels of the vertices below $v_i$ are given by $A_i \backslash \{|w_i(v_j)|\}$  so that  for all $u, u' <_F v_i$, $|w_{i-1}(u)| < |w_{i-1}(u')|$ if and only $|w_{i}(u)| < |w_{i}(u')|$ and  $\sgn w_{i-1}(u) = \sgn w_{i}(u)$ for all $u <_F v_i$. 
\end{enumerate}
Finally, we set $w'=w_0$.  

\begin{lemma}\label{phi}
If  the $\acode$ of $(F,w)$ is $(a_1, \dots, a_n)$, then the $\acode$ of $(F, w_k)$ defined in the previous paragraph is $(a_1, \dots, a_{k}, 0, \dots, 0)$, $1 \leq k \leq n$. Thus $w'$ has no inversions and is a natural positive labeling.
\end{lemma}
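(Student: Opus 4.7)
The plan is to prove the lemma by downward induction on $k$: if the $\acode$ of $w_i$ is $(a_1,\ldots,a_{i-1},a_i,0,\ldots,0)$, then the $\acode$ of $w_{i-1}$ is $(a_1,\ldots,a_{i-1},0,0,\ldots,0)$. The base case $w_n=w$ holds by hypothesis, and iterating gives the claim for every $w_k$.

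The key observation I will establish first is that for any signed labeling $w$, the $\acode$ entry $a_k$ depends only on $w(v_k)$ and on the multiset $\{|w(u)| : u <_F v_k\}$. This follows from a direct case analysis on $\sgn w(u)$ and $\sgn w(v_k)$, which yields, for each $u <_F v_k$,
\[ \chi(w(u) > w(v_k)) + \chi(w(u) + w(v_k) < 0) = \begin{cases} \chi(|w(u)| > |w(v_k)|), & w(v_k) > 0, \\ 1 + \chi(|w(u)| < |w(v_k)|), & w(v_k) < 0, \end{cases} \]
so each contribution is independent of $\sgn w(u)$.

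The inductive step then splits into cases according to the position of $v_k$ relative to $v_i$. When $k = i$, the construction sets $w_{i-1}(v_i) = \max A_i > 0$, which strictly dominates $|w_{i-1}(u)|$ for all $u <_F v_i$, so each indicator in the formula for $a_i$ vanishes and $a_i = 0$. When $k \neq i$ and $v_k$ is incomparable to $v_i$, no vertex lies below both in the forest, so the labels on $\{v_k\} \cup \{u : u <_F v_k\}$ are left unchanged and $a_k$ is unchanged. When $v_i <_F v_k$, we have $w_{i-1}(v_k) = w_i(v_k)$, and the multiset $\{|w_{i-1}(u)| : u <_F v_k\}$ agrees with the corresponding multiset for $w_i$, since on $\{u : u \leq_F v_i\}$ it equals $A_i$ in both labelings and elsewhere the labels are unchanged; the key observation then forces $a_k$ to be invariant.

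The main obstacle is the remaining case $v_k <_F v_i$, in which $w(v_k)$ itself is modified. Here I will use that the construction preserves $\sgn w_{i-1}(v_k) = \sgn w_i(v_k)$ and the relative order of absolute values among all $u, u' <_F v_i$, a set containing $\{v_k\} \cup \{u : u <_F v_k\}$. A four-way case analysis on $(\sgn w(u),\sgn w(v_k))$ shows that $\chi(w(u) > w(v_k)) + \chi(w(u) + w(v_k) < 0)$ is determined entirely by these two signs together with whether $|w(u)| < |w(v_k)|$ or $|w(u)| > |w(v_k)|$; since all of this data is preserved by the modification, each summand is unchanged, and combined with $\chi(w_{i-1}(v_k) < 0) = \chi(w_i(v_k) < 0)$ this gives $a_k$ invariant. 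Once the inductive step is established, $w' = w_0$ has $\acode$ equal to $(0,\ldots,0)$, so Lemma~\ref{lem:B A-code info}(1) yields $\invb(F,w') = \inv(F,w') + \none(F,w') + \ntwo(F,w') = 0$; since all three terms are nonnegative, $\inv(F,w') = \none(F,w') = 0$, so $w'$ is a natural positive labeling.
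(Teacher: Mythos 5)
Your proof is correct, and while it shares the paper's overall skeleton (downward induction on the sorting step, with a case split on the position of $v_k$ relative to $v_i$), the engine driving the hard cases is genuinely different. The paper isolates the $\ntwo$-contribution and argues by a perturbation bound: it tracks how the actual integer values of the labels below $v_i$ move from $w_i$ to $w_{i-1}$ and checks that no sum $w(u)+w(v_j)$ can change sign; making that argument airtight really requires first normalizing the labels to their (signed) ranks within $A_i$, since the raw values can jump by more than one. Your route instead proves up front that
\[
\chi\bigl(w(u) > w(v_k)\bigr) + \chi\bigl(w(u) + w(v_k) < 0\bigr)
\]
depends only on $\sgn w(v_k)$ and on whether $|w(u)|>|w(v_k)|$ --- in particular not on $\sgn w(u)$ --- so that each $\acode$ entry is a function of the sign at $v_k$ and the relative order of absolute values below it. Since the construction of $w_{i-1}$ preserves exactly this data (signs at vertices below $v_i$, relative order of absolute values there, and the multiset $A_i$ of absolute values on $\{u : u\leq_F v_i\}$), invariance of all entries follows uniformly, including for vertices above or incomparable to $v_i$, which the paper treats rather tersely. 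Your reduction to signs and relative order is the cleaner and more robust argument; the paper's value-tracking is more concrete but more delicate. Both correctly conclude that $w'=w_0$ has all-zero $\acode$ and hence is natural and positive.
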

\begin{proof}
Assume that the $\acode$ of $(F, w_{i})$ is $(a_1, \dots, a_{i}, 0, \dots, 0)$. In both $w_i$ and $w_{i-1}$,  all vertices $u$ with $u \nless_F v_i$  form the same number of inversions with vertices below them. So, the corresponding entries in their A-codes are equal.  Furthermore, the choice of the label $w_{i-1}(v_i)$ is such that it's clear that the $i$-th entry of the $\acode$ of $(F, w_{i-1})$ is 0. What remains is to show that the entries of the two A-codes corresponding to the vertices $v_j$ below $v_i$ are the same. The third property of $w_{i-1}$ directly implies
$ \# \{u :  u <_F v_j  \text{ and } w_{i-1}(u)>w_{i-1}(v_j)\} =  \# \{u :  u <_F v_j  \text{ and } w_{i}(u)>w_i(v_j)\}$ and $\chi (w_{i-1}(v_j)<0) = \chi (w_i(v_j)<0)$. So, we only need to check that
 $$ \#\{u:  u <_F v_j  \text{ and } w_{i-1}(u)+w_{i-1}(v_j) <0 \}  = \#\{u:  u <_F v_j  \text{ and } w_i(u)+w_i(v_j) <0 \}.$$ Note that from $w_i$ to $w_{i-1}$, the labels below $v_i$ can stay the same, can increase by $1$ (in which case they were negative), or can decrease by $1$ (in which case they were positive). Therefore, $|(w_{i}(u)+w_{i}(v_j)) - (w_{i-1}(u)+w_{i-1}(v_j))|\leq 2$. Thus, a change in the sign of the sum of two labels can possibly occur only when $w_{i}(u)+w_{i}(v_j)\in \{-2, -1, 1, 2\}$. In case when $w_{i}(u)+w_{i}(v_j)\in \{-2, -1\}$ we have that one of $w_{i}(u), w_{i}(v_j)$ is positive while the other one is negative. So $w_{i-1}(u)+w_{i-1}(v_j) \leq w_{i}(u)+w_{i}(v_j)+1\leq 0$. Since all labels are different in absolute value, $w_{i-1}(u)+w_{i-1}(v_j)<0$. Similarly, if $w_{i}(u)+w_{i}(v_j)\in \{1, 2\}$ then $w_{i-1}(u)+w_{i-1}(v_j)$ must be positive as well. 
\end{proof}

\begin{theorem} \label{thm:B A-code}
Let $F$ be a forest with $n$ vertices. The map \[\phi: \mathcal{W}_B(F) \rightarrow \{(w', (a_1, \dots,a_n)) : w' \in \mathcal{W}(F) \text{ is a natural labeling and } (a_1, \dots,a_n) \in \mathrm{SE}_F^B\}\] is a bijection.
\end{theorem}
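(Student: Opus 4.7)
My plan is to establish bijectivity either by constructing an explicit inverse to $\phi$, or by combining a cardinality check with injectivity. Lemma \ref{phi} already gives half of the well-definedness: the labeling $w' = w_0$ produced by the iterative modification process is a natural (positive) labeling, and by construction $(a_1,\dots,a_n) \in \mathrm{SE}_F^B$ (this follows from Lemma \ref{lem:B A-code info}, since each $a_i$ is bounded by $2h_{v_i}-1$). So $\phi$ maps into the stated codomain. As a sanity check, the codomain has cardinality $\frac{n!}{\prod_{v} h_v} \cdot \prod_{v} 2h_v = 2^n n!$, which equals $|\mathcal{W}_B(F)|$. This reduces the theorem to injectivity (equivalently, surjectivity).

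The cleanest route is to describe an explicit inverse by reversing the $n$-step deconstruction used to build $w_0$ from $w$. Given a pair $(w',(a_1,\dots,a_n))$ in the codomain, set $w_0 = w'$ and recursively build $w_1,\dots,w_n$, inverting one step at a time; then set $\phi^{-1}(w',(a_1,\dots,a_n)) = w_n$. At step $i$, the only data needed is $w_{i-1}$ and $a_i$. Let $A_i = \{|w_{i-1}(u)| : u \leq_F v_i\}$, which has exactly $h_{v_i}$ elements. Refining the analysis in Lemma \ref{lem:B A-code info}, one checks by casework that $a_i$ encodes $w_i(v_i)$ as follows: its sign is positive iff $a_i < h_{v_i}$; if positive, $w_i(v_i)$ is the $(a_i+1)$-th largest element of $A_i$, and if negative, $|w_i(v_i)|$ is the $(a_i - h_{v_i} + 1)$-th smallest element of $A_i$. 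Having chosen $w_i(v_i)$, the labels at vertices outside the subtree rooted at $v_i$ are copied unchanged from $w_{i-1}$, while for vertices $u <_F v_i$ the signs are inherited from $w_{i-1}$ and the absolute values are taken from $A_i \setminus \{|w_i(v_i)|\}$, distributed so as to preserve the relative order of absolute values in $w_{i-1}$.

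The main obstacle is to verify that this reverse step really inverts the forward step: the tricky point is precisely the case analysis showing that the contributions to $a_i$ from the three terms (inversions, sum-negative pairs, sign indicator) determine the rank of $|w_i(v_i)|$ in $A_i$ together with its sign in the way described. In the positive case the inversion count directly gives the rank; in the negative case, each vertex below $v_i$ with smaller absolute value contributes $2$ (to both the inversion count and the sum-negative count) and each with larger absolute value contributes exactly $1$, so $a_i = h_{v_i} + k$ where $k$ is the number of strict descendants with absolute value less than $|w_i(v_i)|$. This identification is the crux.

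With this in hand, one shows by a downward induction on $i$ that the reverse construction applied to $\phi(w)$ recovers $w_{i}$ at each step, and in particular recovers $w_n = w$; conversely, the forward construction applied to the output of the reverse procedure reproduces the original pair $(w',(a_1,\dots,a_n))$, because the preservation of the relative order of absolute values and of signs is built symmetrically into both directions. This gives $\phi$ an explicit two-sided inverse and completes the proof.
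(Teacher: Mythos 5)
Your proposal is correct and follows essentially the same route as the paper: the paper also proves the theorem by exhibiting the explicit inverse that, at step $i$, selects the $(a_i+1)$-st largest element of $A_i$ when $a_i < h_{v_i}$ and the $(a_i - h_{v_i}+1)$-st smallest (with a negative sign) when $h_{v_i} \leq a_i \leq 2h_{v_i}-1$, then redistributes the remaining absolute values below $v_i$ preserving relative order and signs. Your casework justifying why $a_i$ determines the rank and sign of $w_i(v_i)$ (in particular the observation that in the negative case $a_i = h_{v_i} + k$) is exactly the verification the paper leaves implicit, so no gap remains.
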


\begin{proof} First note that by Lemma~\ref{phi} the map $\phi$ is well defined. We now describe the inverse of $\phi$. Given a pair $(w', (a_1,\dots,a_n))$ where $w' \in \mathcal{W}(F)$ is a natural labeling  and $(a_1, \dots,a_n) \in \mathrm{SE}_F^B$, the corresponding labelings $w_i$ from the definition of $\phi$ can be obtained in the following way.  First, $w_0=w'$. If $w_{i-1}$ has been constructed for $i \leq n$, let $A_{i} = \{|w_{i-1}(u)| : u \leq v_{i}\}$.  If $a_i<h_{v_i}$, find the $(a_i+1)$-st largest element in $A_i$, say it is $|w_{i-1}(v_j)|$, and set $w_i(v_i) =|w_{i-1}(v_j)|$.  If $h_{v_i}  \leq a_i <\leq 2h_{v_i} -1$ find the $(a_i - h_{v_i} +1)$-st smallest element of $A_i$, say it is $|w_{i-1}(v_j)|$, and set $w_i(v_i)=-|w_{i-1}(v_j)|$. In either case relabel the vertices below $v_i$ with the elements from $A_i \backslash \{|w_{i-1}(v_j)|\}|$ preserving the order of the original labels in absolute values as well as the signs at the vertices in $w_{i-1}$ (similarly to the third property in the definition of $\phi$ above), and call this new labeling $w_i$. The desired labeling $w$ that corresponds to $(w', (a_1,\dots,a_n))$ is simply $w_n$ constructed in this process. Note that similarly as in Lemma~\ref{phi} one can show that the $\acode$ of $(F, w_i)$ is $(a_1,\dots,a_i, 0, \dots, 0)$ and therefore the $\acode$ of $(F,w)$ is $(a_1, \dots, a_n)$.
\end{proof}

\begin{corollary} \label{cor:B same A-code}
Given a forest $F$ of size $n$ and a sequence $(a_1, \dots,a_n) \in \mathrm{SE}_F^B$, there are $\frac{n!}{\prod_{v \in V(F)}h_v}$ signed labelings of $F$ whose A-code is $(a_1, \dots,a_n)$.  
\end{corollary}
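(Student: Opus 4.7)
The plan is to derive the corollary as an essentially immediate consequence of Theorem~\ref{thm:B A-code}. By that theorem, the map $\phi$ is a bijection from $\mathcal{W}_B(F)$ to the set of pairs $(w', (a_1,\dots,a_n))$ where $w'$ is a natural labeling of $F$ and $(a_1,\dots,a_n) \in \mathrm{SE}_F^B$. Consequently, the fiber of the A-code map over a fixed sequence $(a_1,\dots,a_n) \in \mathrm{SE}_F^B$ is in bijection (via the second coordinate of $\phi$) with the set of natural labelings $w'$ of $F$.

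Thus the only thing that remains is to count the natural labelings of $F$. This is the classical hook length formula for forests, which asserts that the number of linear extensions of the forest poset $(V(F),<_F)$ equals
\[
\frac{n!}{\prod_{v \in V(F)} h_v}.
\]
I would simply cite this standard result (it appears, for instance, in Knuth or Stanley's \emph{Enumerative Combinatorics}) rather than reprove it, since it is not the focus of this paper. Combining this with the bijection $\phi$ gives the stated count.

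There is essentially no obstacle here; the main content is already carried by Theorem~\ref{thm:B A-code}. The one small thing to double-check is that the second coordinate of $\phi$ really does range over \emph{all} pairs $(w', (a_1,\dots,a_n))$ with $w'$ natural and $(a_1,\dots,a_n) \in \mathrm{SE}_F^B$ independently, so that fixing the A-code leaves the natural labeling $w'$ free. This independence is built into the codomain of $\phi$ as stated in Theorem~\ref{thm:B A-code}, so the corollary follows immediately.
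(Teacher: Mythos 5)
Your proposal is correct and matches the paper's own proof: the paper likewise deduces the corollary directly from Theorem~\ref{thm:B A-code} together with the standard hook length formula $\frac{n!}{\prod_{v\in V(F)}h_v}$ for the number of natural labelings of $F$. (Minor wording quibble: the fiber over a fixed A-code is parametrized by the \emph{first} coordinate $w'$ of $\phi$, not the second, but your intent is clear.)
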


\begin{proof}
This follows from Theorem~\ref{thm:B A-code} and the well-known fact that there are $\frac{n!}{\prod_{v \in V(F)}h_v}$ natural labelings of the forest F. \end{proof}

\begin{theorem}
Let $F$ be a forest of size $n$. Then 
\begin{equation} \label{eq: invb,Btmaxb}
\sum_{w \in \mathcal{W}_B(F)} p^{\none(F,w)}q^{\invb(F,w)} \prod_{v \in \Btmaxb(F,w)} t_v = \frac {n!}{\prod_{v \in V(F)} h_{v} } \prod _{v \in V(F)} \left((1+pq^{h_v} )[h_v] -1 +t_v\right).
\end{equation}
\end{theorem}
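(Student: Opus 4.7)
The plan is to reduce the weighted sum on the left to a product by pushing everything through the A-code bijection $\phi$ of Theorem~\ref{thm:B A-code}. By Lemma~\ref{lem:B A-code info}, if $\acode(F,w)=(a_1,\dots,a_n)$ then $\invb(F,w)=\sum_i a_i$, the set $\Btmaxb(F,w)$ is exactly $\{v_i : a_i=0\}$, and $\none(F,w)$ counts the indices $i$ with $h_{v_i}\le a_i\le 2h_{v_i}-1$. Hence each of the three statistics on the left depends on $w$ only through its A-code; in particular the summand factors index by index.

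Next, I would invoke Corollary~\ref{cor:B same A-code} to rewrite the sum over $\mathcal{W}_B(F)$ as a sum over the code space $\mathrm{SE}_F^B$ scaled by the common multiplicity $n!/\prod_{v\in V(F)} h_v$. After swapping the sum and product over the coordinates of the A-code, the left-hand side becomes
\[
\frac{n!}{\prod_{v\in V(F)} h_v}\, \prod_{i=1}^{n}\; \sum_{a=0}^{2h_{v_i}-1} p^{\chi(a\ge h_{v_i})}\, q^{a}\, t_{v_i}^{\chi(a=0)}.
\]

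It then suffices to verify, for each vertex $v$ with $h:=h_v$, the single-variable identity
\[
\sum_{a=0}^{2h-1} p^{\chi(a\ge h)}\, q^{a}\, t_{v}^{\chi(a=0)} \;=\; (1+pq^{h})[h]-1+t_v.
\]
Splitting the inner sum according to the three cases $a=0$, $1\le a\le h-1$, and $h\le a\le 2h-1$, the contributions are $t_v$, $q+q^2+\dots+q^{h-1}=[h]-1$, and $p(q^h+\dots+q^{2h-1})=pq^h[h]$ respectively. Adding these yields $t_v+([h]-1)+pq^h[h]=(1+pq^h)[h]-1+t_v$, as required. Since this is the only genuine calculation in the argument and it is entirely mechanical, there is no real obstacle: the theorem follows at once once the A-code bijection and the interpretations in Lemma~\ref{lem:B A-code info} are in place.
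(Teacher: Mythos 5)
Your proposal is correct and is exactly the argument the paper intends: the paper's proof is the single sentence that the theorem is a direct consequence of Lemma~\ref{lem:B A-code info}, Theorem~\ref{thm:B A-code}, and Corollary~\ref{cor:B same A-code}, and your write-up simply supplies the routine factorization over the independent coordinates of $\mathrm{SE}_F^B$ and the single-vertex generating function check that this sentence leaves implicit. No gaps.
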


\begin{proof}
This is a direct consequence of Lemma~\ref{lem:B A-code info}, Theorem~\ref{thm:B A-code}, and Corollary~\ref{cor:B same A-code}.
\end{proof}

As a corollary we obtain a generalization of the results of Bj\"orner and Wachs~\cite{BW1} and Chen, Gao, and Guo~\cite{CGG}.

\begin{corollary} \label{cor: invb,Btmaxb} Let $F$ be a forest of size $n$. Then 
\begin{equation}\label{inv unsigned} \sum_{w \in \mathcal{W}(F)} q^{\inv(F,w)} \prod_{v \in \Btmax(F,w)} t_v = \frac {n!}{\prod_{v \in V(F)} h_{v} } \prod _{v \in V(F)} \left([h_v] -1 +t_v\right)\end{equation}
\begin{equation} \label{inv signed} \sum_{w \in \mathcal{W}_B(F)} q^{\invb(F,w)} \prod_{v \in \Btmaxb(F,w)} t_v = \frac {n!}{\prod_{v \in V(F)} h_{v} } \prod _{v \in V(F)} \left([2h_v] -1 +t_v\right)\end{equation}
\begin{equation}\label{inv even} \sum_{w \in \mathcal{W}_D(F)} q^{\invd(F,w)} \prod_{v \in \Btmaxd(F,w)} t_v = \frac {n! \times 2^{\#\text{leaves in }F -1}}{\prod_{v \in V(F)} h_{v} }  \prod _{\substack{v \in V(F)\\\ v \text{ is not a leaf}}} \left((1+q^{h_v-1} )[h_v] -1 +t_v\right)\end{equation}
\end{corollary}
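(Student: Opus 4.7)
All three identities follow by specialization from the master generating function~\eqref{eq: invb,Btmaxb}. For~\eqref{inv unsigned}, I would set $p=0$: the factor $p^{\none(F,w)}$ annihilates every term with a negative label, collapsing the sum to $w\in\mathcal{W}(F)$ (on which $\invb=\inv$ and $\Btmaxb=\Btmax$), while the right-hand side simplifies because $(1+0\cdot q^{h_v})[h_v]=[h_v]$. For~\eqref{inv signed}, I would set $p=1$ and invoke the elementary identity $(1+q^{h_v})[h_v]=[2h_v]$.

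Equation~\eqref{inv even} will be the main obstacle. Since $\invb(F,w)=\invd(F,w)+\none(F,w)$, we have $p^{\none}q^{\invb}=(pq)^{\none}q^{\invd}$. Substituting $p=q^{-1}$ and $p=-q^{-1}$ into~\eqref{eq: invb,Btmaxb} and averaging (the standard parity trick that keeps only labelings with an even number of bars) gives
\[
\sum_{w\in\mathcal{W}_D(F)} q^{\invd(F,w)} \prod_{v\in\Btmaxb(F,w)} t_v = \frac{n!}{2\prod_{v\in V(F)}h_v}\bigl[P_{+}(q,t) + P_{-}(q,t)\bigr],
\]
where $P_{\pm}(q,t)=\prod_{v\in V(F)}\bigl((1\pm q^{h_v-1})[h_v]-1+t_v\bigr)$.

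To finish, I would specialize $t_v=1$ at every leaf $v$ of $F$. For a leaf $h_v=1$, so the corresponding factor equals $2$ in $P_{+}$ and $0$ in $P_{-}$; the presence of at least one leaf therefore kills $P_{-}$ entirely, while $P_{+}$ acquires a global factor of $2^{\#\text{leaves}}$, producing the prefactor $n!\cdot 2^{\#\text{leaves}-1}/\prod h_v$. The same specialization simultaneously converts $\prod_{v\in\Btmaxb(F,w)} t_v$ into $\prod_{v\in\Btmaxd(F,w)} t_v$, since by definition $\Btmaxd$ is obtained from $\Btmaxb$ by omitting the leaves; the two simplifications happen in lockstep and deliver the claimed formula.
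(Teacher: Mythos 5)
Your proposal is correct and follows essentially the same route as the paper: equations~\eqref{inv unsigned} and~\eqref{inv signed} are the specializations $p=0$ and $p=1$ of~\eqref{eq: invb,Btmaxb}, and for~\eqref{inv even} the paper likewise uses $\invb=\invd+\none$ to rescale $p$ by $q^{-1}$, extracts the even part in $p$ by the same $\pm$-averaging parity trick, and uses the leaf factors (which become $1+p$ after setting $t_v=1$ at leaves) to kill the minus term and produce the $2^{\#\text{leaves}-1}$ prefactor. The only difference is cosmetic: the paper sets $t_v=1$ at the leaves before running the parity argument, while you do it afterwards.
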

\begin{proof}
For $w \in \mathcal{W}_B(F)$ which is actually in $\mathcal{W}(F)$, $\inv(F,w) = \invb(F,w)$ and $\Btmax(F,w)= \Btmaxb(F,w)$. Therefore,~\eqref{inv unsigned} follows from~\eqref{eq: invb,Btmaxb} by setting $p=0$. The equation~\eqref{inv signed} is  obtained by setting $p=1$ in~\eqref{eq: invb,Btmaxb}. To get~\eqref{inv even}, let 
\[ D_n(p,q,t) =  \sum_{w \in \mathcal{W}_B(F)} p^{\none(F,w)}q^{\invd(F,w)} \prod_{v \in \Btmaxd(F,w)} t_v. \] Then
\begin{align*} D_n(p,q,t) &= \sum_{w \in \mathcal{W}_B(F)} p^{\none(F,w)}q^{\inv(F,w) +\ntwo(F,w)} \prod_{v \in \Btmaxb(F,w)} t_v \bigg|_{\substack{t_v = 1\\\ v \text{ is a leaf}}} \\
&= \sum_{w \in \mathcal{W}_B(F)} \left(\frac{p}{q}\right)^{\none(F,w)}q^{\invb(F,w)} \prod_{v \in \Btmaxb(F,w)} t_v \bigg|_{\substack{t_v = 1\\\ v \text{ is a leaf}}} \\
&\overset{\eqref{eq: invb,Btmaxb}}{=} \frac {n!}{\prod_{v \in V(F)} h_{v} } \prod _{v \in V(F)} \left((1+pq^{h_v-1} )[h_v] -1 +t_v\right) \bigg|_{\substack{t_v = 1\\\ v \text{ is a leaf}}} \\
&=  \frac {n!}{\prod_{v \in V(F)} h_{v} } (1+p)^{\#\text{leaves in }F} \prod _{\substack{v \in V(F)\\\ v \text{ is not a leaf}}} \left((1+pq^{h_v-1} )[h_v] -1 +t_v\right).
\end{align*}
Since $F$ has at least one leaf, $D_n(-1,q,t) = 0$, which implies 
\[\sum_{i \text{ is even}} [p^i] D_n(p,q,t)  = \sum_{i \text{ is odd}} [p^i] D_n(p,q,t), \] where $[p^i] D_n(p,q,t)$ denotes the coefficient in $D_n(p,q,t)$ in front of $[p^i] $. Therefore, 
\begin{align*} \sum_{w \in \mathcal{W}_D(F)} q^{\invd(F,w)} \prod_{v \in \Btmaxd(F,w)} t_v &= \sum_{i \text{ is even}} [p^i] D_n(p,q,t) \\
&= \frac{D_n(1,q,t)}{2} \\
&= \frac {n! \times 2^{\#\text{leaves in }F -1}}{\prod_{v \in V(F)} h_{v} }  \prod _{\substack{v \in V(F)\\\ v \text{ is not a leaf}}} \left((1+q^{h_v-1} )[h_v] -1 +t_v\right). \end{align*}

\end{proof}

Now we show how Corollary~\ref{cor: invb,Btmaxb} generalizes results for permutations.  Recall that for a permutation $\sigma \in S_n$, its length $\ell(\sigma)$ as an element in a Coxeter group with the standard generators is equal to the number of inversions, i.e., $\ell(\sigma) = \inv(\sigma) = \#\{(i,j): 1 \leq i <j \leq n, \sigma_i > \sigma_j\}$. For an element $\sigma \in B_n$, the length function is given by
\[ \ell_B(\sigma) = \inv(\sigma) + \none(\sigma) + \ntwo(\sigma),\]
where 
\[\none(\sigma)=\#\{i: 1\leq i \leq n, \sigma_i <0\}, \]
\[\ntwo(\sigma) = \#\{(i,j): 1 \leq i <j \leq n, \sigma_i + \sigma_j< 0\}.\] For an even-signed permutation $\sigma \in D_n$, the length function can be computed as
\[\ell_D(\sigma) = \inv(\sigma) + \ntwo(\sigma).\]
Moreover, for $\sigma \in S_n$, the set of right-to-left minimum letters is defined by
\[\Rlmin(\sigma) = \{\sigma_i: \sigma_i < \sigma_j \text{ for all } j >i \}.\] The type B right-to-left minimum letters for $\sigma \in B_n$ are defined by
\[\Rlminb(\sigma) = \{\sigma(i):  0<\sigma(i) < |\sigma_j| \text{ for all } j >i \}.\] The type D right-to-left minimum letters for $\sigma \in D_n$ are defined by
\[\Rlmind(\sigma) = \{\sigma(i): 1<\sigma(i) < |\sigma_j| \text{ for all } j >i \}.\]

Let $F$ be a tree of size $n$ with one leaf whose vertices are naturally indexed and $w \in \mathcal{W}_B(F)$. Let $\sigma$ be the signed permutation obtained by reading the labeling $w$ of $F$ from bottom to top, i.e., $\sigma = w(v_1) \cdots w(v_n)$. Then clearly $\inv(F,w) = \inv(\sigma)$, $\none(F,w) = \none(\sigma)$, $\ntwo(F,w) = \ntwo(\sigma)$. Therefore, if $\sigma \in S_n$, $\ell(\sigma) = \inv(F,w)$, if $\sigma \in B_n$, $\ell_B(\sigma) = \inv_B(F,w)$, and if $\sigma \in D_n$, $\ell_D(\sigma) = \inv_D(F,w)$.

The statistic $\Rlmin$ is related to $\Btmax$ in the following way.
\begin{lemma} \label{lem inv} Let $F$ be a linear tree with $n$ vertices. Let $w \in \mathcal{W}_B(F)$ and let $\sigma = w(v_1)\cdots w(v_n)$ be the corresponding signed permutation.  Then $\Btmaxb(F,w) = \Rlminb(\sigma^{-1})$. Moreover, if $w \in \mathcal{W}(F)$ then $\Btmax(F,w)= \Rlmin(\sigma^{-1})$ and if $w \in \mathcal{W}_D(F)$ then $\Btmaxd(F,w) = \Rlmind(\sigma^{-1})$.
\end{lemma}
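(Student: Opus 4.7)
The plan is to unfold each of the three set definitions until they become a condition on the one-line notation $\sigma = \sigma_1 \cdots \sigma_n$ of the signed permutation, and then match the ``left-to-right record'' structure of $\sigma$ with the ``right-to-left minimum'' structure of $\sigma^{-1}$.

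First I would observe that since $F$ is a linear tree on naturally indexed vertices, the order $<_F$ coincides with the index order: $u <_F v_i$ iff $u = v_j$ with $j < i$. Identifying the vertex $v_i$ with its index $i$, the definition of $\Btmaxb$ becomes
\[ \Btmaxb(F,w) = \{ i : \sigma_i > 0 \text{ and } \sigma_i > |\sigma_j| \text{ for all } j < i\}, \]
so $\Btmaxb(F,w)$ is just the set of positions at which $\sigma$ attains a positive left-to-right absolute-value record.

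Next I would rewrite $\Rlminb(\sigma^{-1})$ in the same language. For a signed permutation $\sigma$, the inverse $\sigma^{-1}$ satisfies $\sigma^{-1}(k) = p$ when $\sigma_p = k > 0$ and $\sigma^{-1}(k) = -p$ when $\sigma_p = -k < 0$; thus $|\sigma^{-1}(k)|$ is the position of $\pm k$ in $\sigma$ and $\sign \sigma^{-1}(k)$ is the sign of that entry. A value $\sigma^{-1}(k) = p$ belongs to $\Rlminb(\sigma^{-1})$ iff $\sigma_p = k > 0$ and for every $\ell > k$ the label $\pm\ell$ occurs at a position $|\sigma^{-1}(\ell)| > p$; equivalently, $\sigma_p > 0$ and $\sigma_p > |\sigma_j|$ for all $j < p$. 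This is exactly the condition displayed above, so the two sets coincide.

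The remaining equalities then follow by specialization. If $w \in \mathcal{W}(F)$ all labels are positive, and both pairs $\Btmaxb$/$\Btmax$ and $\Rlminb$/$\Rlmin$ reduce to their unsigned counterparts under the same translation. For the type $D$ statement, the only leaf of a linear tree is $v_1$, so the extra ``not a leaf'' clause in the definition of $\Btmaxd$ is precisely the exclusion of the index $1$ from $\Btmaxb(F,w)$, and this matches the extra condition $1 < \sigma^{-1}(i)$ built into the definition of $\Rlmind$. The main bookkeeping obstacle is to handle the signed inverse correctly; once the positions-versus-values duality is written out, each of the three identities is immediate.
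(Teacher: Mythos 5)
Your proof is correct, but it takes a different route from the paper's. The paper proves the identity by induction on $n$: it removes the root $v_{n+1}$, standardizes the remaining labeling to get $(F',w')$ and the corresponding $\sigma'$, observes that $v_{n+1}\in\Btmaxb(F,w)$ iff $w(v_{n+1})=n+1$ iff $n+1\in\Rlminb(\sigma^{-1})$, and invokes the induction hypothesis. You instead unfold both definitions directly into the same condition on the one-line notation of $\sigma$, namely that both $\Btmaxb(F,w)$ and $\Rlminb(\sigma^{-1})$ are identified with $\{p:\sigma_p>0 \text{ and } \sigma_p>|\sigma_j| \text{ for all } j<p\}$, using the positions-versus-values duality $|\sigma^{-1}(k)|=$ position of $\pm k$ in $\sigma$ and $\sgn\sigma^{-1}(k)=\sgn$ of that entry. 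Your key equivalence (``every $\ell>k$ sits at a position after $p$'' iff ``every entry before $p$ has absolute value less than $k$'') is valid because the absolute values of the entries are distinct, and both arguments rely on the same implicit identification of the vertex $v_i$ with the integer $i$. Your approach buys a shorter, non-inductive argument that avoids the standardization bookkeeping, and it also treats the type $D$ case explicitly (the unique leaf of a linear tree is $v_1$, matching the exclusion of the letter $1$ in $\Rlmind$), which the paper's proof leaves to the reader; the paper's inductive scheme, on the other hand, is the template it reuses for the analogous statements about $\sorb$ in Section 3, so it fits the surrounding development more uniformly.
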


\begin{proof}
Assume that the first statement holds for all linear trees of size at most $n$.  Let $F$ be a linear tree of size $n+1$ and let $w \in \mathcal{W}_B(F)$.  Now let $F'$ be the tree of size $n$ obtained by removing the root $v_{n+1}$ of $F$, and let $w' \in \mathcal{W}_B(F')$ be the corresponding standardized labeling obtained from $w$ by decreasing the absolute values of all labels larger than $|w(v_{n+1})|$ by 1 and preserving the signs.  Note that $v_{n+1} \in \Btmax_B(F)$  if and only if $w(v_{n+1}) = n+1$.  Therefore, if $w(v_{n+1)} \neq n+1$ then $\Btmaxb{(F,w)} = \Btmaxb{(P',w')}$, and if $w(v_{n+1}) = n+1$ then $\Btmaxb{(F,w)} = \Btmaxb{(F',w')} \cup \{v_{n+1}\}$.  Let $\sigma = w(v_1)  \cdots w(v_{n+1})$, and $\sigma'  = w'(v_1)  \cdots w'(v_n)$.  The permutation $\sigma'$ can be obtained from $\sigma$ by deleting the last letter $w(v_{n+1})$ and standardizing, and therefore, $\sigma'^{-1}$ is obtain by removing $n+1$ or $\overline{(n+1)}$ from $\sigma^{-1}$.  Since $w(v_{n+1})$ determines the position of $n+1$ in $\sigma^{-1}$, $n+1 \in \Rlminb(\sigma^{-1})$  if and only if $w(v_{n+1}) = n+1$.  Applying this and the induction hypothesis we have that if $w(v_{n+1}) \neq n+1$ then $\Rlminb{(\sigma^{-1})} = \Rlminb{(\sigma'^{-1})} = \Btmaxb{(F',w')} = \Btmaxb{(F,w)}$, and if $w(v_{n+1}) = n+1$ then $\Rlminb{(\sigma^{-1})} = \Rlminb{(\sigma'^{-1})} \cup \{n+1\} = \Btmaxb{(F',w')} \cup \{n+1\} = \Btmaxb{(F,w)}$.  Therefore $\Btmaxb{(F,w)} = \Rlminb{(\sigma^{-1})}$.

If there are no signed letters, the same argument shows that $\Btmax(F,w)= \Rlmin(\sigma^{-1})$. 
\end{proof}

Since $\inv(\sigma) = \inv(\sigma^{-1})$ and the corresponding statement is also true for signed and even signed permutations, as a direct consequence of Lemma~\ref{lem inv} and Corollary~\ref{cor: invb,Btmaxb} we get the following results for permutations.
\begin{corollary}
\begin{equation} \label{cor perm} \sum_{\sigma \in S_n} q^{\inv(\sigma)} \prod _{i \in \Rlmin(\sigma)} t_i = \prod _{i=1}^n ([i] -1 + t_i) \end{equation}
\begin{equation} \label{cor sign} \sum_{\sigma \in B_n} q^{\invb(\sigma)} \prod _{i \in \Rlminb(\sigma)} t_i = \prod _{i=1}^n \left([2i] -1 +t_i\right) \end{equation}
\begin{equation} \label{cor even} \sum_{\sigma \in D_n} q^{\invd(\sigma)} \prod _{i \in \Rlmind(\sigma)} t_i = \prod _{i=2}^n \left((1+q^{i-1} )[i] -1 +t_i\right) \end{equation}
\end{corollary}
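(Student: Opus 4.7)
My plan is to specialize Corollary~\ref{cor: invb,Btmaxb} to the case where $F$ is the linear tree on $n$ naturally indexed vertices, and then transport the resulting three identities from labelings back to permutations via Lemma~\ref{lem inv}.

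For a linear tree with $v_1 <_F \cdots <_F v_n$, the hook value $h_{v_i}$ is simply $i$, so $\prod_v h_v = n!$ and the prefactor $n!/\prod_v h_v$ collapses to $1$. The tree has a single leaf $v_1$, so the extra factor $2^{\#\text{leaves} - 1}$ in~\eqref{inv even} also equals $1$, and the product over non-leaf vertices ranges over $v_2,\dots,v_n$, matching the range $i = 2,\dots,n$ on the right-hand side of~\eqref{cor even}. Moreover, the map $w \mapsto \sigma = w(v_1)\cdots w(v_n)$ is a bijection $\mathcal{W}(F) \to S_n$, $\mathcal{W}_B(F) \to B_n$, $\mathcal{W}_D(F) \to D_n$; the paragraph immediately preceding Lemma~\ref{lem inv} already records that under this bijection $\inv(F,w)$, $\invb(F,w)$, $\invd(F,w)$ coincide with the Coxeter length statistics $\inv(\sigma)$, $\invb(\sigma)$, $\invd(\sigma)$ on permutations.

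Next I would apply Lemma~\ref{lem inv} to replace $\Btmax(F,w)$, $\Btmaxb(F,w)$, $\Btmaxd(F,w)$ by $\Rlmin(\sigma^{-1})$, $\Rlminb(\sigma^{-1})$, $\Rlmind(\sigma^{-1})$ respectively, under the identification $v_i \leftrightarrow i$, and substitute this data into~\eqref{inv unsigned}, \eqref{inv signed}, \eqref{inv even}. This immediately produces the three target identities except that the $\Rlmin$-type statistics are evaluated at $\sigma^{-1}$ rather than $\sigma$. I would finish by reindexing each sum via the involution $\sigma \mapsto \sigma^{-1}$ on $S_n$, $B_n$, $D_n$, using the fact that the length function of any Coxeter group is invariant under inversion, so the $q$-weights $q^{\inv(\sigma)}$, $q^{\invb(\sigma)}$, $q^{\invd(\sigma)}$ are preserved while the right-to-left minima are transported back to $\sigma$.

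There is no substantive obstacle: once the specialization to the linear tree is made, the corollary follows directly from Lemma~\ref{lem inv} and the inversion-invariance of the length function. The only non-cosmetic bookkeeping is verifying that the products $\prod_{i=1}^n\bigl([2i]-1+t_i\bigr)$ and $\prod_{i=2}^n\bigl((1+q^{i-1})[i]-1+t_i\bigr)$ emerge cleanly from the vertex-indexed products of Corollary~\ref{cor: invb,Btmaxb}, which is exactly what the observations $h_{v_i} = i$ and $\#\text{leaves} = 1$ ensure.
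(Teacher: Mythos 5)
Your proposal is correct and follows essentially the same route as the paper: the authors likewise specialize Corollary~\ref{cor: invb,Btmaxb} to the linear tree (where $h_{v_i}=i$ and there is a single leaf), invoke Lemma~\ref{lem inv} to identify the bottom-to-top maxima with $\Rlmin(\sigma^{-1})$ and its signed analogues, and conclude using the invariance of $\inv$, $\invb$, $\invd$ under $\sigma \mapsto \sigma^{-1}$. Your write-up simply makes explicit the bookkeeping that the paper leaves implicit.
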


Equation~\eqref{cor perm} was first shown in~\cite{BW2}, while~\eqref{cor sign} and~\eqref{cor even} can be found in~\cite{Poznanovic}, where a more general case of restricted permutations was also studied.

\section{Sorting Index  and Cycles}\label{S: sor}

In this section we introduce two new statistics for labeled forests, sorting index and cycle minima, and study their joint distribution. They are both motivated by corresponding permutation statistics which we first recall.
A signed permutation $\sigma \in B_n$ has a unique factorization as a product of signed transpositions, $\sigma = (i_1,j_1) \cdots (i_k,j_k)$, where $i_s<j_s$ for $1\leq s \leq k$ and  $0<j_1<\cdots<j_k$.  Then \[\sorb(\sigma) = \sum_{r=1}^k \left( j_r-i_r-\chi(i_r<0) \right).\]  Similarly as for unsigned permutations, the desired decomposition into transpositions can be  computed via application of the so called Straight Selection Sort algorithm which first places $n$ in the $n$-th position by applying a transposition, then places $n-1$ in the $(n-1)$-st position by applying a transposition, etc.
For example, for the signed permutation $\sigma = 4\bar{2}15\bar{3}$ we have \[4\bar{2}15\bar{3} \overset{(45)} \rightarrow 4\bar{2}1\bar{3}5 \overset{(14)} \rightarrow \bar{3}\bar{2}145 \overset{(\bar{1}3)} \rightarrow \bar{1}\bar{2}345 \overset{(\bar{2}2)} \rightarrow \bar{1}2345 \overset{(\bar{1}1)} \rightarrow 12345.\] Therefore, $\sorb(\sigma) = (1-(-1)-1)+(2-(-2)-1)+(3-(-1)-1)+(4-1)+(5-4) = 11$. It is not difficult to see that if $\sigma \in S_n$, then $\sor_B(\sigma) = \sor(\sigma)$ 

The set of minimal elements of the cycles of $\sigma \in S_n$ is denoted by $\Cyc(\sigma)$. Signed permutations can be decomposed into two types of cycles: balanced and unbalanced. The balanced cycles are of the form $(a_1, \dots, a_k)$ (this cycle also takes $\overline{a}_1$ to $\overline{a}_2$, etc.) while the unbalanced cycles are of the form $(a_1, \dots, a_k, \overline{a_1}, \dots, \overline{a_k})$, for $k \geq 1$ and all $a_1, \dots, a_k$ different.  For a signed permutation $\sigma \in B_n$ we let \[ \Cycb(\sigma) = \{|m| : m  \text{ is a minimal number in absolute value in a balanced cycle of } \sigma \}.\]

Now we introduce the sorting index for signed labeled forests. It is computed via a sorting algorithm related to Straight Selection Sort. To describe it we introduce the following notation. For a signed forest $(F, w)$, and a vertex $v$, $w_v$ will denote the labeling of the subtree of $F$ rooted at $v$ which is induced by $w$. 
The algorithm for computing the sorting index of type B, $\sorb(F,w)$, is as follows \\
\begin{lstlisting}[mathescape]
$\sorb(F,w)$=0
for i in range(n,1,-1) 
begin
	let $v$ be the vertex with $|w(v)|=i$ and let $u$ be the
	largest vertex such that $u \geq_F v$ and $|w(u)|\leq i$
	
	if $w(u)>0$ 
		$\sorb(F,w)$ = $\sorb(F,w)$ + $|w_u(v)|$ - $ w_u(u)$ 
	else 
		$\sorb(F,w)$ = $\sorb(F,w)$ + $|w_u(v)|$ - $w_u(u)$ - 1
	
	if $w(v)>0$
		interchange the labels on the vertices $u$ and $v$
	else
		multiply $w(u)$, and $w(v)$ by -1, and then 
		interchange the labels on the vertices $u$ and $v$		
		
	call the new labeling $w$	
end
\end{lstlisting}

For $w \in \mathcal{W}(F)$, we will define the type A sorting index, $\sor(F,w)$,  to be the same as $\sorb(F,w)$. Since in this case there are no negative labels, the sorting algorithm can be simplified and we present it here for convenience.
\begin{lstlisting}[mathescape]
$\sor(F,w)$=0
for i in range(n,1,-1) 
begin
	let $v$ be the vertex with $|w(v)|=i$ and let $u$ be the
	largest vertex such that $u \geq_F v$ and $w(u)\leq i$
	
	$\sor(F,w)$ = $\sor(F,w)$ + $w_u(v)$ - $w_u(u)$ 
	
	interchange the labels on the vertices $u$ and $v$ 
	and call the new labeling $w$
end
\end{lstlisting}
An example of sorting a signed labeled tree is given in Figure~\ref{fig: sorting}. For this tree we have : $\sor(F,w) = (5-2) + (1 +1-1) + (3-1) + (1+1-1) + (1-1) = 7$. 

The sorting algorithm applied to a labeling $w$ produces a positive natural labeling $w'$ of $F$. While for signed labelings $w$, the map $w \circ (w')^{-1}$ is technically a map $\{1, 2, \dots, n\} \rightarrow \{\pm 1, \pm 2, \dots, \pm n\}$, it can be uniquely extended to a signed permutation in $B_n$. 
\begin{definition} For $w \in \mathcal{W}(F)$, we define the \emph{minimal cycle vertices} of $(F,w)$ to be
\[\Cyc(F,w)=\{ v : w'(v) \in \Cyc(w \circ (w')^{-1})\}.\]
 For $w \in \mathcal{W}_B(F)$, we define the \emph{type B minimal cycle vertices} of $(F,w)$ to be
\[\Cycb(F,w)=\{ v : w'(v) \in \Cycb(w \circ (w')^{-1})  \}.\]
\end{definition}

For example, the signed permutation that corresponds to the signs labeled tree from Figure~\ref{sgnl} is $w \circ (w')^{-1} = 3\bar{5}1\bar{4}2 = (1 3) (2 \bar{5} \bar{2} 5) ( 4 \bar{4} )$. Therefore, $\Cycb(F,w) = \{v_1\}$.

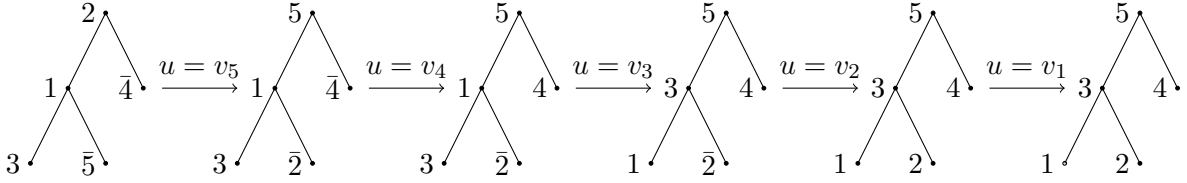
\begin{figure} 
\begin{center}
\begin{tikzpicture} [scale = 0.5]

\begin{scope}[xshift = 0.5cm]
\draw (0,0) -- (1,2) -- (2,4) -- (3,2);
\draw (1,2) -- (2,0);
\draw [fill] (0,0) circle [radius=0.05];
\node [left] at (0,0) {3};
\draw [fill] (1,2) circle [radius=0.05];
\node [left] at (1,2) {1};
\draw [fill] (2,4) circle [radius=0.05];
\node [left] at (2,4) {2};
\draw [fill] (3,2) circle [radius=0.05];
\node [left] at (3,2) {$\bar{4}$};
\draw [fill] (2,0) circle [radius=0.05];
\node [left] at (2,0) {$\bar{5}$};

\draw [->] (3.5,2) -- (5.5,2) node[midway,above] {$u=v_5$};
\end{scope}

\begin{scope}[xshift = 1cm]
\draw (5,0) -- (6,2) -- (7,4) -- (8,2);
\draw (6,2) -- (7,0);
\draw [fill] (5,0) circle [radius=0.05];
\node [left] at (5,0) {3};
\draw [fill] (6,2) circle [radius=0.05];
\node [left] at (6,2) {1};
\draw [fill] (7,4) circle [radius=0.05];
\node [left] at (7,4) {5};
\draw [fill] (8,2) circle [radius=0.05];
\node [left] at (8,2) {$\bar{4}$};
\draw [fill] (7,0) circle [radius=0.05];
\node [left] at (7,0) {$\bar{2}$};

\draw [->] (8.5,2) -- (10.5,2) node[midway,above] {$u=v_4$};
\end{scope}

\begin{scope}[xshift = 1.5cm]
\draw (10,0) -- (11,2) -- (12,4) -- (13,2);
\draw (11,2) -- (12,0);
\draw [fill] (10,0) circle [radius=0.05];
\node [left] at (10,0) {3};
\draw [fill] (11,2) circle [radius=0.05];
\node [left] at (11,2) {1};
\draw [fill] (12,4) circle [radius=0.05];
\node [left] at (12,4) {5};
\draw [fill] (13,2) circle [radius=0.05];
\node [left] at (13,2) {4};
\draw [fill] (12,0) circle [radius=0.05];
\node [left] at (12,0) {$\bar{2}$};

\draw [->] (13.5,2) -- (15.5,2) node[midway,above] {$u=v_3$};
\end{scope}

\begin{scope} [xshift = 2cm]
\draw (15,0) -- (16,2) -- (17,4) -- (18,2);
\draw (16,2) -- (17,0);
\draw [fill] (15,0) circle [radius=0.05];
\node [left] at (15,0) {1};
\draw [fill] (16,2) circle [radius=0.05];
\node [left] at (16,2) {3};
\draw [fill] (17,4) circle [radius=0.05];
\node [left] at (17,4) {5};
\draw [fill] (18,2) circle [radius=0.05];
\node [left] at (18,2) {4};
\draw [fill] (17,0) circle [radius=0.05];
\node [left] at (17,0) {$\bar{2}$};

\draw [->] (18.5,2) -- (20.5,2) node[midway,above] {$u=v_2$};
\end{scope}

\begin{scope} [xshift = 2.5cm]
\draw (20,0) -- (21,2) -- (22,4) -- (23,2);
\draw (21,2) -- (22,0);
\draw [fill] (20,0) circle [radius=0.05];
\node [left] at (20,0) {1};
\draw [fill] (21,2) circle [radius=0.05];
\node [left] at (21,2) {3};
\draw [fill] (22,4) circle [radius=0.05];
\node [left] at (22,4) {5};
\draw [fill] (23,2) circle [radius=0.05];
\node [left] at (23,2) {4};
\draw [fill] (22,0) circle [radius=0.05];
\node [left] at (22,0) {2};

\draw [->] (23.5,2) -- (25.5,2) node[midway,above] {$u=v_1$};
\end{scope}

\begin{scope} [xshift = 3cm]
\draw (25,0) -- (26,2) -- (27,4) -- (28,2);
\draw (26,2) -- (27,0);
\draw  (25,0) circle [radius=0.05];
\node [left] at (25,0) {1};
\draw [fill] (26,2) circle [radius=0.05];
\node [left] at (26,2) {3};
\draw [fill] (27,4) circle [radius=0.05];
\node [left] at (27,4) {5};
\draw [fill] (28,2) circle [radius=0.05];
\node [left] at (28,2) {4};
\draw [fill] (27,0) circle [radius=0.05];
\node [left] at (27,0) {2};
\end{scope}

\end{tikzpicture}
\caption {Sorting of the signed labeled tree from Figure~\ref{sgnl}. } 
\label{fig: sorting}
\end{center}
\end{figure}

Note that each vertex in $F$ plays the role of $u$ in the sorting algorithm exactly once. We define the $\bcode$ of $(F,w)$ to be the integer sequence $(b_1,\dots,b_n) \in \mathrm{SE}_F^B$ where $b_i$ is equal to the amount added to the sorting index in the step of the algorithm when $u = v_i$, (i.e. $b_i = |w_u(v)| - w_u(u) - 1$ or $b_i = |w_u(v)| -  w_u(u)$). One can think of $b_i$ as  the amount contributed to the sorting index by the vertex $v_i$.  For example, for the tree in Figure~\ref{sgnl}, $\bcode(F,w) = (0, 1, 2, 1, 3)$.

\begin{lemma} \label{lem:B B-code info}
Let $w \in \mathcal{W}_B(F)$ and let  $\bcode(F,w) = (b_1,b_2,\dots,b_n)$. Then 
\begin{enumerate}
\item $\sorb(F,w) = \sum _{i=1}^n b_i$ 
\item $\Cycb(F,w) = \{v_i : b_i = 0\}$
\item $w \in \mathcal{W}(F)$ if and only if $b_i < h_{v_i}$ for all $i$.
\end{enumerate}
\end{lemma}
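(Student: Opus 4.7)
Part (1) is immediate: each iteration of the sorting loop adds the contribution $b_i$ to $\sorb(F,w)$ in the step where $v_i$ plays the role of $u$, so summation over $i$ recovers $\sorb(F,w)$.

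The technical core needed for both (2) and (3) is a subclaim: at the iteration where $v_i$ plays the role of $u$ (call it iteration $i'$), the distinguished vertex $v$ realizes the maximum $|w|$ over the subtree rooted at $v_i$, so that $|w_u(v)|=h_{v_i}$. The plan for its proof is by contradiction: if some vertex $f$ in this subtree were already frozen from an earlier iteration $i_f>i'$ (so that its current label has absolute value $i_f$), then at iteration $i_f$ the vertex $v_i\geq_F f$ would itself have been an eligible ancestor strictly larger than $f$ in the forest order, unless $|w(v_i)|>i_f$ held at that time; but $v_i$ is a bystander in every iteration strictly between $i_f$ and $i'$ (it neither plays $u$ yet nor matches the current iteration's absolute value), so $|w(v_i)|>i_f>i'$ would persist to iteration $i'$, contradicting $|w(v_i)|\leq i'$.

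Using $|w_u(v)|=h_{v_i}$, the formula for $b_i$ reduces to $h_{v_i}-w_u(u)$ when $w(u)>0$ and to $h_{v_i}-w_u(u)-1$ when $w(u)<0$, which yields $b_i<h_{v_i}$ exactly when $w(u)>0$. This proves (3): if $w\in\mathcal{W}(F)$ then the ``if $w(v)>0$'' branch is always taken, positivity is preserved throughout, and every $b_i<h_{v_i}$; conversely, the count of negative labels can strictly decrease only in iterations with $w(u)<0$ (either both $w(u),w(v)<0$ pair off under the negation, or $u=v$ with $w(v)<0$ flips a single sign), so any initial negative label forces some iteration to have $w(u)<0$ and hence $b_i\geq h_{v_i}$.

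For (2), I will view the sorting algorithm as signed selection sort on $\pi:=w\circ(w')^{-1}\in B_n$. Letting $w^{(i')}$ denote the labeling at the start of iteration $i'$, the subclaim (together with the fact that after serving as $u$ in iteration $i'$ a vertex holds the label $i'$ and is never touched again, so $w'(u)=i'$) shows that the $u$ of iteration $i'$ is the unique vertex $v_i$ with $w'(v_i)=i'$, and that $\sigma^{(i')}:=w^{(i')}\circ(w')^{-1}$ fixes every position exceeding $i'$. The iteration then converts $\sigma^{(i')}$ into $\sigma^{(i'-1)}$ either by the identity (precisely when $u=v$ and $w(v)>0$, which is exactly $b_i=0$) or by a signed transposition of positions $i'$ and $j:=w'(v)<i'$ — the very step of signed selection sort for $\pi$ at position $i'$. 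By the standard cycle-structure correspondence for signed selection sort in $B_n$, the trivial step occurs at iteration $i'$ if and only if $i'$ is the minimal-in-absolute-value element of a balanced cycle of $\pi$, equivalently $v_i\in\Cycb(F,w)$. The main obstacle is verifying the frozen-vertex subclaim, which is what couples $b_i$ to $h_{v_i}$ in (3) and what validates the selection-sort analogy in (2).
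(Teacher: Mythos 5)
Your proof is correct, and for parts (1) and (3) it follows essentially the paper's route: the paper likewise observes that $|w_u(v)|=h_{v_i}$ at the step where $u=v_i$ (though it asserts this without justification, whereas your no-frozen-vertex-below-$u$ argument actually proves it, which is a genuine improvement in rigor), deduces that $b_i<h_{v_i}$ exactly when the current label of $u$ is positive, and then argues that an initial negative label forces some step with $w(u)<0$ because negative signs are only eliminated in such steps. Part (2) is where you genuinely diverge. The paper proves it by induction on the size of $F$: for a disconnected forest the B-code is a concatenation over the component trees, and for a tree it deletes the root $v_n$, notes that $w_1\circ (w_1')^{-1}$ is obtained from $w\circ (w')^{-1}$ by deleting $n$ and $\bar{n}$ from the cycle notation, and treats $v_n$ separately ($n$ is the minimum of a balanced cycle iff it is a fixed point iff $w(v_n)=n$ iff $b_n=0$). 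You instead show that the forest algorithm, read through $\sigma^{(i')}=w^{(i')}\circ (w')^{-1}$, is literally Straight Selection Sort on $w\circ(w')^{-1}\in B_n$, and then invoke the known correspondence between trivial selection-sort steps and minimal elements of balanced cycles. Your route is more conceptual and makes the linear-tree specializations (Theorems~\ref{thm: sorb} and~\ref{thm: Cycb}) nearly automatic, but it imports a permutation-level fact from the literature (it appears in~\cite{Poznanovic} and~\cite{CGG2}) where the paper's induction is self-contained; to make the lemma stand alone you should either cite that fact explicitly or supply its short proof. Both arguments ultimately rest on the same observation, which you state correctly, that the step at $i'$ is trivial precisely when $u=v$ and $w(v)>0$, i.e.\ precisely when $b_i=0$.
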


\begin{proof}
The first part follows from the way $b_i$ is defined. 

For the second part we will use induction on $n$, the size of $F$.  Suppose that the statement is true for all forests of size less than $n$. First assume $F$ is a forest with trees $T_1,T_2,\dots,T_k$ for some $k>1$.  The $\bcode$ of $F$ is a concatenation of the B-codes of $(T_1, w_1),(T_2, w_2),\dots,(T_k, w_k)$ (with possible rearrangements depending on the indexing of the vertices), where $w_j$ is $w$ restricted to $T_j$.  The vertex $ v_i$  is in $\Cycb(F,w)$ if and only if for some $j \leq k$, $v_i \in \Cycb(T_j, w_j)$ .  By the induction hypothesis $v_i \in V(T_j)$ is an element of  $\Cycb(T_j,w_j)$ if and only if $b_i = 0$.  Therefore $i$ is an element of $\Cycb(F,w)$ if and only if $b_i= 0$.

Now assume that $k=1$, i.e, $F$ is a tree.  Let $F_1$ be the forest obtained by removing the root $v_n$ from the tree $F$, and let $w_1$ be the labeling obtained by restricting $w$ to $F_1$ and replacing the label $n$ with $w(v_n)$. Now let $w'$ and $w_1'$ be the labelings of $F$ and $F_1$, respectively, obtained by sorting $w$ and $w_1$, respectively.  The permutation $w_1 \circ w_1'^{-1}$ can be obtained from $w \circ w'^{-1}$ by deleting the elements $n$ and $\overline{n}$ in the cycle notation of $w \circ w'^{-1}$.  Thus for $i=1,\dots, n-1$, $v_i \in \Cycb(F_1,w_1)$ if and only if $v_i \in \Cyc_B(F,w)$.  Applying the induction hypothesis, for all $i=1,\dots, n-1$, $v_i \in \Cycb(F,w)$ if and only if $b_i=0$.  The value $n$ is a minimal element of a balanced cycle in $w \circ w'^{-1}$ if and only if it is in a cycle by itself and thus $w(v_n) = n$, which happens exactly when $b_n = 0$.  Therefore, $\Cycb(F,w) = \{ v_i : b_i = 0\}$.

For the third part, note that the value $|w_u(v)|$ that appears in the sorting algorithm is equal to $h_u$. Therefore, the contribution of $u$ to $\sorb(F,w)$ is less than $h_u$ if and only if the current label of the vertex $u$ is positive. Because of the rule of interchanging the signs of the labels during the sorting process, if the starting labeling $w$ has at least one negative sign there will be a step in the process in which $u$ has a negative label. On the other hand, if $w \in \mathcal{W}(F)$, then all the labels remain positive throughout the sorting.
\end{proof}

Similarly to the $\acode$, the $\bcode$ also induces a map $\psi$ from $\mathcal{W}_B(F)$ onto the set of pairs $(w', (b_1, \dots, b_n))$ of a natural positive labeling $w'$ of $F$ and a sequence $(b_1, \dots, b_n) \in \mathrm{SE}_F^B$. The natural labeling $w'$ is the one obtained by sorting $w$, while  $(b_1, \dots, b_n)$ is the B-code of $(F,w)$.

\begin{theorem} \label{thm sor} Let $F$ be a forest with $n$ naturally indexed vertices $v_1, \dots, v_n$. The map \[ \psi : \mathcal{W}_B(F) \rightarrow \{(w', (b_1, \dots,b_n)) : w' \in \mathcal{W}(F) \text{ is a natural labeling  and }(b_1, \dots,b_n) \in \mathrm{SE}_F^B\} \] is a bijection. Restricted on positive labelings, $\psi$ is a bijection from $\mathcal{W}(F)$ to the set of pairs $(w', (b_1, \dots,b_n))$ where $w' \in \mathcal{W}(F)$ is a natural labeling  and $(b_1, \dots,b_n) \in \mathrm{SE}_F$.
\end{theorem}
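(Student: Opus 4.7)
My plan is to mirror the proof of Theorem~\ref{thm:B A-code} by first verifying well-definedness of $\psi$ and then constructing an explicit inverse that runs the sorting algorithm backwards one iteration at a time. For well-definedness I would note that the sorting algorithm terminates with a natural positive labeling $w'$ (because the target $u$ chosen at each iteration is always a maximal unprocessed vertex in $F$, and it ends up carrying the next available positive value) and that each $b_i$ lies in $\{0,1,\dots,2h_{v_i}-1\}$, which follows from the formula $b_i = h_{v_i} - w_{v_i}(v_i) - \chi(w(v_i)<0)$ at the moment $v_i$ serves as $u$: the rank $|w_{v_i}(v_i)|\in\{1,\dots,h_{v_i}\}$ gives the two disjoint ranges $[0,h_{v_i}-1]$ and $[h_{v_i},2h_{v_i}-1]$ corresponding to $\sgn w(v_i)=+$ and $\sgn w(v_i)=-$, respectively.

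For bijectivity I would exhibit an explicit inverse $\psi^{-1}$. Given $(w',(b_1,\dots,b_n))$, I would start with $\tilde w:=w'$ and process $k=2,3,\dots,n$ in increasing order. At step $k$ I let $u:=(w')^{-1}(k)$, read off from $b_u$ both $s_u:=\sgn w(u)$ (with $s_u=+1$ iff $b_u<h_u$) and the rank $r$ of $|w(u)|$ among the absolute values of labels on the subtree rooted at $u$ (via $r=h_u-b_u$ if $b_u<h_u$ and $r=b_u-h_u+1$ otherwise), identify $v$ as the unique vertex in that subtree whose current absolute label has rank $r$, and undo the corresponding forward swap and sign flip by setting $\tilde w(u):=s_u\cdot|\tilde w(v)|$ and $\tilde w(v):=s_u\cdot\sgn(\tilde w(v))\cdot k$. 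After all $n-1$ steps, $w:=\tilde w$ is the candidate preimage.

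To confirm that this recipe really inverts $\psi$, I would induct on $k$ and show that after step $k$ of the inverse, $\tilde w$ coincides with the state the forward algorithm produces immediately after its iteration $k$ when applied to the reconstructed $w$. The critical structural input is that in the forward algorithm the set of already-processed vertices is upward-closed in $F$, so an iteration with parameter $k'<k$ affects only vertices strictly below $u_{k'}$, leaving both the label $+k$ at $u_k$ and the multiset of absolute values on the subtree of $u_k$ unchanged between the end of forward iteration $k$ and the moment we reach step $k$ of the inverse. Granted this invariant, the rank-$r$ selection identifies the correct partner $v_k$ and the sign rule recovers the original $w(u_k)$ and $w(v_k)$; the compositions $\psi\circ\psi^{-1}$ and $\psi^{-1}\circ\psi$ then collapse to the identity step by step.

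The restriction claim follows immediately from Lemma~\ref{lem:B B-code info}(3): a signed labeling is unsigned exactly when every $b_i<h_{v_i}$, which is the condition $(b_1,\dots,b_n)\in\mathrm{SE}_F$. The hardest part I anticipate is the inductive verification, specifically pinning down the upward-closed processed-set invariant and using it to conclude that the absolute-value multiset on the subtree of $u_k$ is identical in the forward and inverse states at the moment of interest---without this, the rank-$r$ recovery in the inverse would fail to identify the correct partner vertex.
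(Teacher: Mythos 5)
Your proposal takes essentially the same route as the paper: both arguments prove bijectivity by exhibiting the explicit inverse that undoes the sorting algorithm one label at a time in increasing order of the final label, recovering the partner vertex from the code entry $b$ as the $(b+1)$-st largest (resp.\ $(b-h+1)$-st smallest) absolute value in the relevant subtree according to whether $b<h$ or $b\geq h$, reading the sign information off the same dichotomy, and deducing the restriction to positive labelings from part (3) of Lemma~\ref{lem:B B-code info}. Your inductive verification via the upward-closed processed-set invariant is a correct filling-in of detail that the paper leaves implicit.

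One small slip: the inverse must run over all $n$ labels, including $k=1$, as the paper's inverse does (``Begin with $i=1$\dots''). The vertex $v$ with $w'(v)=1$ is a minimal vertex of $F$, so $h_v=1$ and its entry of the $\bcode$ ranges over $\{0,1\}$ in $\mathrm{SE}_F^B$, with the value $1$ recording that the label of smallest absolute value in $w$ was negative. Stopping at $k=2$ as you propose never restores that sign, so your map would miss exactly the signed labelings whose minimal-absolute-value label is negative and would not be surjective onto all of $\mathrm{SE}_F^B$. For positive labelings the $k=1$ step is vacuous (that entry is forced to be $0$ in $\mathrm{SE}_F$), so the second assertion of the theorem is unaffected; adding the $k=1$ step with the same rank-and-sign rule completes the signed case.
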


\begin{proof} We describe the inverse of $\psi$. Given a pair $(w',(b_1,\dots, b_n))$  of  a natural labeling $w' \in \mathcal{W}(F)$ and $(b_1, \dots,b_n) \in \mathrm{SE}_F^B$, the original labeling $w$ can be recovered in the following way.  Begin with $i=1$, and let $j$ be such that $|w'(v_j)| =i$. Let $B_i = \{|w'(v)| : v\leq_F v_j\}$.  If $b_j < h_{v_j}$, let $u$ be the vertex so that $|w'(u)|$ is the $(b_i+1)$-st largest element in $B_i$. Then if $w'(u) > 0$ simply interchange the labels of $u$ and $v_j$.  Otherwise, first change the signs of the labels of $u$ and $v_j$ and then interchange them. Otherwise, if $h_{v_j} \leq b_j < 2h_{v_j}$, let $u$ be the vertex so that $|w'(u)|$ is the $(b_j - h_{v_j} +1)$-st smallest element of $B_i$. Then if $w'(u) < 0$ simply interchange the labels of $u$ and $v_j$.  Otherwise, first change the signs of the labels of $u$ and $v_j$ and then interchange them.  Keep calling the new labeling $w'$. Repeat for $i=2,\dots,n$.  The final labeling is the desired $w \in \mathcal{W}_B(F)$.

The second part of the theorem follows from the third part of Lemma~\ref{lem:B B-code info}.
\end{proof}

\begin{corollary} \label{cor: sorB,CycB}
Let $F$ be a forest of size $n$, then 
\begin{equation} \label{eq: sor,Cyc}
\sum_{w \in \mathcal{W}(F)} q^{\sor(F,w)} \prod_{v \in \Cyc(F,w)} t_v = \frac {n!}{\prod_{v \in V(F)}h_v} \prod _{v \in V(F)} \left([h_{v}] - 1 + t_{v}\right),
\end{equation}
\begin{equation} \label{eq: sorb,Cycb}
\sum_{w \in \mathcal{W}_B(F)} q^{\sorb{(F,w)}} \prod_{v \in \Cycb{(F,w)}} t_v = \frac {n!}{\prod_{v \in V(F)}h_v } \prod _{v \in V(F)} \left([2h_{v}] - 1 + t_v\right).
\end{equation}
\end{corollary}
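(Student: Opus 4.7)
The plan is to derive both identities directly from Lemma~\ref{lem:B B-code info} and the bijection $\psi$ of Theorem~\ref{thm sor}, which together translate all the relevant data into a simple product over vertices. I first handle the signed identity~\eqref{eq: sorb,Cycb}; the unsigned identity~\eqref{eq: sor,Cyc} then follows by restricting to positive labelings.

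The first step is to rewrite the weight $q^{\sorb(F,w)}\prod_{v\in\Cycb(F,w)}t_v$ in terms of the $\bcode$. By parts (1) and (2) of Lemma~\ref{lem:B B-code info}, if $\bcode(F,w)=(b_1,\ldots,b_n)$ then this weight equals $\prod_{i=1}^n q^{b_i} t_{v_i}^{\chi(b_i=0)}$. In particular, it depends only on the tuple $(b_1,\ldots,b_n)$ and not on the natural labeling $w'$ produced by sorting. Applying the bijection $\psi$ of Theorem~\ref{thm sor}, I can sum over pairs $(w',(b_1,\ldots,b_n))$ instead of over $w\in\mathcal{W}_B(F)$. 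The sum over natural labelings $w'$ contributes exactly the factor $n!/\prod_{v\in V(F)}h_v$ (the well-known count of natural labelings of $F$), which is independent of the chosen $\bcode$.

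What remains is to compute $\sum_{(b_1,\ldots,b_n)\in\mathrm{SE}_F^B}\prod_{i=1}^n q^{b_i} t_{v_i}^{\chi(b_i=0)}$. Since the defining constraints $0\le b_i\le 2h_{v_i}-1$ are independent across $i$, this sum factors as
\[
\prod_{i=1}^n\Bigl(t_{v_i}+q+q^2+\cdots+q^{2h_{v_i}-1}\Bigr)=\prod_{v\in V(F)}\bigl([2h_v]-1+t_v\bigr),
\]
which combined with the prefactor yields~\eqref{eq: sorb,Cycb}.

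For the unsigned identity~\eqref{eq: sor,Cyc}, I use the second part of Theorem~\ref{thm sor}: the map $\psi$ restricts to a bijection from $\mathcal{W}(F)$ onto pairs $(w',(b_1,\ldots,b_n))$ with $(b_1,\ldots,b_n)\in\mathrm{SE}_F$, by part (3) of Lemma~\ref{lem:B B-code info}. The same factorization argument, now with each $b_i$ ranging over $0,1,\ldots,h_{v_i}-1$, produces $\prod_{v\in V(F)}([h_v]-1+t_v)$ and finishes the proof. There is no serious obstacle here: the bijection $\psi$ and Lemma~\ref{lem:B B-code info} do all the heavy lifting, reducing the corollary to a single routine factorization of a geometric-type sum.
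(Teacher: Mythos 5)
Your proof is correct and follows essentially the same route as the paper: both use Lemma~\ref{lem:B B-code info} to express the weight in terms of the $\bcode$, Theorem~\ref{thm sor} to replace the sum over labelings by a sum over pairs of a natural labeling and a subexcedent sequence, and then the factorization of the generating function for $\mathrm{SE}_F^B$ (resp.\ $\mathrm{SE}_F$). The only difference is that you spell out the routine factorization $\prod_{v}\bigl(t_v+q+\cdots+q^{2h_v-1}\bigr)$ explicitly, which the paper leaves implicit.
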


\begin{proof} This is a direct consequence of Lemma~\ref{lem:B B-code info} and Theorem~\ref{thm sor}. The products on the right-hand side of~\eqref{eq: sor,Cyc} and~\eqref{eq: sorb,Cycb} are the generating functions for the sequences in $\mathrm{SE}_F$ and $\mathrm{SE}_F^B$, respectively, according to total sum of elements and positions of zeros. The factor $n!/\prod_{v \in V(F)}h_v$ is due to the fact that the B-code is a $(n!/\prod_{v \in V(F)}h_v)$-to-1 map. 
\end{proof}

Our definition of $\sor$ and $\Cyc$ for labeled forests was motivated by corresponding permutation statistics. Petersen~\cite{Petersen} showed that
\[ \sum_{\sigma \in B_n} q^{\sorb(\sigma)} t^{\sorb(\sigma)} = \sum_{\sigma \in B_n} q^{\invb(\sigma)} t^{\rmilb(\sigma)}.\] This equidistribution was later generalized to include  $r$-colored permutations and, instead of just $\sor$ and $\cyc$, the result was refined in terms of set-valued statistics $\Rmil$ and $\Cyc$ as well  as additional statistics that allow to deduce results for restricted permutations~\cite{Poznanovic, CGG2, ELW}. 

The following two theorems reveal the relation of the statistics $\sor$ and $\Cyc$ for labeled forests with the corresponding permutation statistics.

\begin{theorem} \label{thm: sorb}
Let $F$ be a linear tree of size $n$ and $w \in \mathcal{W}_B(F)$.  Let $\sigma = w(v_1)  \cdots w(v_n)$ be the corresponding signed permutation, then $\sorb(F,w)= \sorb(\sigma^{-1})$. Consequently, if $w \in \mathcal{W}(F)$, then $\sor(F,w)= \sor(\sigma^{-1})$.
\end{theorem}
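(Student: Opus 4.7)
I would prove this by induction on $n$, showing that one iteration of the tree sorting algorithm applied to $(F,w)$ contributes the same amount as the first transposition of the signed Straight Selection Sort applied to $\sigma^{-1}$, and that the post-step configurations are again related as a linear tree of size $n-1$ and its inverted word. The base case $n=1$ is immediate: both sorts produce $0$ when $w(v_1)=1$ and $1$ when $w(v_1)=\bar 1$.

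For the inductive step, I would first compare the two first-step contributions. Since $F$ is a chain $v_1 <_F v_2 <_F \cdots <_F v_n$, the subtree rooted at the root $v_n$ is all of $F$, so the induced standardized labeling $w_{v_n}$ agrees with $w$. The vertex $v$ with $|w(v)|=n$ is $v_k$ where $k=|\sigma^{-1}(n)|$, and the unique largest $u \geq_F v_k$ with $|w(u)|\leq n$ is $v_n$. Hence $|w_u(v)|-w_u(u)=n-\sigma_n$, and the tree-sort step contributes $n-\sigma_n$ when $\sigma_n>0$ and $n-\sigma_n-1=n+|\sigma_n|-1$ when $\sigma_n<0$. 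On the permutation side, $\pm n$ appears in $\sigma^{-1}$ at position $|\sigma_n|$ carrying the sign of $\sigma_n$, so the first Selection Sort transposition is $(\sigma_n,n)$ or $(\overline{|\sigma_n|},n)$, contributing the same amount.

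Next I would verify that the post-step objects remain in the inverse relationship. Let $w'$ denote the labeling after the tree step, set $\sigma'=w'(v_1)\cdots w'(v_n)$, and let $\tau=\sigma^{-1}\cdot t$ where $t$ is the Selection Sort transposition. A short case analysis on the four sign combinations of $\sigma_n$ and $\sigma_k$ shows that in every case $\sigma'_n=n$, $\tau_n=n$, and $(\sigma')^{-1}=\tau$. The tree-algorithm rule that flips the signs of both $w(u)$ and $w(v)$ before interchanging when $w(v)<0$ is exactly what is needed to mirror the signed transposition on the permutation side; for instance, when $\sigma_n>0$ and $\sigma_k=\bar n$ the rule forces $w'(v_k)=-\sigma_n$, so that $(\sigma')^{-1}(\sigma_n)=\bar k$, matching the value $\bar k$ that $t=(\sigma_n,n)$ moves from position $n$ to position $\sigma_n$ in $\sigma^{-1}$. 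The other three sign combinations are handled analogously.

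Since $\sigma'_n=n=\tau_n$, the remaining iterations of each sort act only on $v_1,\ldots,v_{n-1}$ and positions $1,\ldots,n-1$, respectively, producing a linear tree $F'$ of size $n-1$ with labeling $w'|_{F'}$ and a signed permutation $\sigma''\in B_{n-1}$ satisfying $(\sigma'')^{-1}=\tau|_{\{1,\ldots,n-1\}}$. By the inductive hypothesis, $\sorb(F',w'|_{F'})=\sorb((\sigma'')^{-1})$, and adding the common first-step contributions yields $\sorb(F,w)=\sorb(\sigma^{-1})$. The unsigned statement $\sor(F,w)=\sor(\sigma^{-1})$ is obtained by specializing the argument to labelings with no negative labels, in which only the branch with $\sigma_n>0$ and $\sigma_k=n$ ever arises. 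I expect the main technical obstacle to be keeping the four sign subcases straight; once they are matched, the induction is straightforward.
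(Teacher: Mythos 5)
Your proposal is correct and follows essentially the same route as the paper's proof: induction on $n$, matching the first-step contribution of the tree sort at the root with the first Straight Selection Sort transposition applied to $\sigma^{-1}$ (using that the label of the root is the position of $\pm n$ in $\sigma^{-1}$), and then checking that the post-step objects are again a linear tree labeling and the inverse of its reading word. The only difference is presentational: the paper deletes the root and restandardizes to a size-$(n-1)$ instance, and splits cases by whether $n$ or $\bar n$ occurs as a label rather than by your four sign combinations, which it largely asserts rather than verifies.
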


\begin{proof}
Assume that the theorem holds for all linear trees of size at most $n$.  Let $F$ be a linear tree of size $n+1$ and let $F_1$ be the tree of size $n$ obtained by removing the root $v_{n+1}$ of $F$.

Consider first the case when $n+1$ appears as a label in $w$. Let $w_1$ be the labeling of $F_1$ obtained by restricting $w$ to $F_1$ and replacing the label $n+1$ with $w(v_{n+1})$. If $w(v_{n+1})<0$ then $\sorb(F,w) = \sorb(F_1,w_1) + (n+1) - w(v_{n+1})  - 1$, and if $w(v_{n+1})>0$ then $\sorb(F,w) = \sorb(F_1,w_1) + (n+1) - w(v_{n+1})$. Now let $\sigma = w(v_1)  \cdots w(v_{n+1})$, and $\sigma_1 = w_1(v_1)  \cdots w_1(v_n)$. Note that $\sigma_1^{-1}$ is the permutation obtained from $\sigma^{-1}$ by performing the first step of the Straight Selection Sort Algorithm and deleting $n+1$. Moreover, $w(v_{n+1})$ is the position of $n+1$ in $\sigma^{-1}$. Therefore, if $w(v_{n+1}) < 0$ then $\sorb(\sigma^{-1}) = \sorb(\sigma_1^{-1}) + (n+1) - w(v_{n+1})  - 1$, and if $w(v_{n+1}) >0$ then $\sorb(\sigma^{-1}) = \sorb(\sigma_1^{-1}) + (n+1) - w(v_{n+1})$. The claim follows by applying the induction hypothesis.

The proof in the case  when $\overline{n+1}$ appears as a label in $w$ is similar. We set $w_1$ to be the labeling of $F_1$ obtained by restricting $w$ to $F_1$ and replacing the label $\overline{n+1}$ with $\overline{w(v_{n+1})}$. If $w(v_{n+1})<0$ then $\sorb(F,w) = \sorb(F_1,w_1) + (n+1) - w(v_{n+1})  - 1$, and if $w(v_{n+1})>0$ then $\sorb(F,w) = \sorb(F_1,w_1) + (n+1) - w(v_{n+1})$. So, we can continue as above. 

Finally, note  that if a labeling or a permutation are in fact positive then the type B and type A sorting index coincide.
\end{proof}

\begin{theorem} \label{thm: Cycb}
Let $F$ be a linear tree with naturally indexed vertices $v_1, \dots, v_n$.  Let $w \in \mathcal{W}_B(F)$ and let  $\sigma = w(v_1) w(v_2) \cdots w(v_n) \in B_n$. Then $ v_i \in \Cycb{(F,w)}$ if and only if  $i \in \Cycb(\sigma^{-1})$. Consequently, if $w \in \mathcal{W}(F)$, then $v_i \in \Cyc(F,w)$ if and only if $i \in \Cyc(\sigma^{-1})$.
\end{theorem}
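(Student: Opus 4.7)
The plan is to reduce the statement to the simple identity $\Cycb(\sigma) = \Cycb(\sigma^{-1})$, via the observation that on a linear tree the signed permutation $w \circ (w')^{-1}$ is literally $\sigma$.

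First I would note that any linear tree with naturally indexed vertices admits exactly one natural labeling $w'$, namely $w'(v_i) = i$ for every $i$, and the sorting algorithm applied to any $w \in \mathcal{W}_B(F)$ must terminate at this $w'$. Consequently the signed permutation $w \circ (w')^{-1} \in B_n$ of the definition of $\Cycb(F,w)$ sends each $i$ to $w((w')^{-1}(i)) = w(v_i) = \sigma_i$, so $w \circ (w')^{-1}$ coincides with $\sigma$. Therefore $v_i \in \Cycb(F,w)$ if and only if $w'(v_i) = i$ lies in $\Cycb(\sigma)$, and the theorem will follow once I verify $\Cycb(\sigma) = \Cycb(\sigma^{-1})$.

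To establish this identity I would argue cycle by cycle. If $(a_1, a_2, \ldots, a_k)$ is a balanced cycle of $\sigma$, then $\sigma$ sends $a_j \mapsto a_{j+1}$ and $\overline{a_j} \mapsto \overline{a_{j+1}}$ (indices mod $k$), so $\sigma^{-1}$ sends $a_{j+1} \mapsto a_j$ and $\overline{a_{j+1}} \mapsto \overline{a_j}$; this is precisely the balanced cycle $(a_1, a_k, a_{k-1}, \ldots, a_2)$ of $\sigma^{-1}$, which has the same underlying set and hence the same minimum absolute value. A parallel calculation shows that an unbalanced cycle $(a_1, \ldots, a_k, \overline{a_1}, \ldots, \overline{a_k})$ of $\sigma$ inverts to an unbalanced cycle of $\sigma^{-1}$, so it contributes nothing to either $\Cycb(\sigma)$ or $\Cycb(\sigma^{-1})$. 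Thus the two sets of balanced-cycle minima coincide, as required.

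The unsigned consequence is immediate: if $w \in \mathcal{W}(F)$ then $w \circ (w')^{-1} = \sigma \in S_n$, and ordinary cycles of $\sigma$ invert to cycles of $\sigma^{-1}$ with the same supports, so $\Cyc(\sigma) = \Cyc(\sigma^{-1})$ and $v_i \in \Cyc(F,w) \Leftrightarrow i \in \Cyc(\sigma^{-1})$. The only non-routine step in the whole argument is the cycle-by-cycle bookkeeping verifying that balanced cycles of $\sigma$ invert to balanced cycles of $\sigma^{-1}$ (and likewise for unbalanced ones); everything else is a direct unraveling of the definition of $\Cycb(F,w)$ together with the uniqueness of the natural labeling on a linear tree.
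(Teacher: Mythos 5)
Your proposal is correct and follows essentially the same route as the paper: identify the unique natural labeling $w'(v_i)=i$ of a linear tree, conclude $w\circ (w')^{-1}=\sigma$, and reduce to $\Cycb(\sigma)=\Cycb(\sigma^{-1})$. The only difference is that you spell out the cycle-by-cycle verification of this last identity, which the paper simply cites as a known fact.
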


\begin{proof}
By definition, $\Cycb{(F,w)} = \{v_i :  w'(v_i) \in \Cycb(w \circ w'^{-1})\}$. For a linear tree  the sorted labeling $w'$ is  given by $w'(v_i) = i$.  Therefore for every $w \in \mathcal{W}_B(P)$, $w \circ w'^{-1} = \sigma$, and hence $v_i \in \Cycb(F,w)$ if and only if  $i \in \Cycb(\sigma)$. The claim then follows from the fact that $\Cycb(\sigma)=\Cycb(\sigma^{-1})$.
\end{proof}

As a corollary to Corollary \ref{cor: sorB,CycB}, Theorem \ref{thm: sorb}, and Theorem \ref{thm: Cycb} we get the following result.

\begin{corollary} [\cite{CGG2, Poznanovic}]
\[ \sum_{\sigma \in S_n} q^{\sor{(\sigma)}} \prod _{i \in \Cyc{(\sigma)}} t_i = \prod _{i=1}^n ([i] -1 + t_i)\]
\[\sum_{\sigma \in B_n} q^{\sorb{(\sigma)}} \prod _{i \in \Cycb{(\sigma)}} t_i = \prod _{i=1}^n ([2i] -1 + t_i). \]

\end{corollary}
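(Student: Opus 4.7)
The plan is to derive this corollary essentially immediately by specializing Corollary~\ref{cor: sorB,CycB} to the case when $F$ is a linear tree of size $n$ and then transferring the forest statistics to permutation statistics via Theorems~\ref{thm: sorb} and~\ref{thm: Cycb}. First I would observe that when $F$ is a linear tree with naturally indexed vertices $v_1, \dots, v_n$ (root at $v_n$), the subtree rooted at $v_i$ consists precisely of $v_1, \dots, v_i$, so $h_{v_i} = i$. Consequently the prefactor $n!/\prod_{v} h_v$ in~\eqref{eq: sor,Cyc} and~\eqref{eq: sorb,Cycb} collapses to $1$, and the right-hand sides become $\prod_{i=1}^n([i] - 1 + t_{v_i})$ and $\prod_{i=1}^n([2i] - 1 + t_{v_i})$ respectively, after relabeling the indeterminates $t_{v_i} = t_i$.

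Next, for a linear tree the map $w \mapsto \sigma := w(v_1) w(v_2) \cdots w(v_n)$ is a bijection from $\mathcal{W}(F)$ onto $S_n$ and from $\mathcal{W}_B(F)$ onto $B_n$. Applying Theorem~\ref{thm: sorb} gives $\sor(F,w) = \sor(\sigma^{-1})$ and $\sorb(F,w) = \sorb(\sigma^{-1})$, while Theorem~\ref{thm: Cycb} gives $v_i \in \Cyc(F,w) \iff i \in \Cyc(\sigma^{-1})$ and the analogous equivalence in type $B$. Substituting these into~\eqref{eq: sor,Cyc} and~\eqref{eq: sorb,Cycb} yields
\[ \sum_{\sigma \in S_n} q^{\sor(\sigma^{-1})} \prod_{i \in \Cyc(\sigma^{-1})} t_i = \prod_{i=1}^n([i] - 1 + t_i), \]
and likewise in type $B$.

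Finally, since $\sigma \mapsto \sigma^{-1}$ is a bijection on $S_n$ and on $B_n$, the left-hand side above can be reindexed by replacing $\sigma$ with $\sigma^{-1}$, producing the two identities in the statement. There is no real obstacle here; the only small point to handle carefully is verifying $h_{v_i} = i$ for the linear tree so that the prefactor and the $q$-integers on the right match the desired permutation identities, and confirming that the bijection $w \leftrightarrow \sigma$ is compatible with the conventions used in Theorems~\ref{thm: sorb} and~\ref{thm: Cycb}.
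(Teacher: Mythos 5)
Your proposal is correct and matches the paper's intended argument exactly: the paper states this result as an immediate consequence of Corollary~\ref{cor: sorB,CycB}, Theorem~\ref{thm: sorb}, and Theorem~\ref{thm: Cycb}, which is precisely the specialization to a linear tree (where $h_{v_i}=i$) followed by the transfer to permutations and the reindexing $\sigma \mapsto \sigma^{-1}$ that you describe.
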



\section{Major Index and Cyclic Bottom-to-Top Maxima}\label{S: maj}

While for permutations it is  true that 
\[ \sum_{\sigma \in S_n} q^{\inv(\sigma)} t^{\rmil(\sigma)} = \sum_{\sigma \in S_n} q^{\maj(\sigma)} t^{\rmil(\sigma)}, \]
for a  general forest $F$, $(\inv, \#\Btmax)$ and $(\maj,\# \Btmax)$ are not equidistributed over $\mathcal{W}(F)$. In this section we find a suitable Stirling partner for $\maj$ for labeled forests and then discuss the case of signed labelings. 

\begin{definition}
Let $(F,w)$ be a labeled forest. A vertex $v$ is a \emph{cyclic bottom-to-top maximum} if its label is a bottom-to-top maximum with respect to the cyclic shift of the natural ordering of the integers $1, 2, \dots, n$ beginning with the label of the parent of $v$, $p$. Precisely, if $w(v) < w(p)$, then $v$ is a cyclic bottom-to-top maximum if 
\[ \{u: u <_F v, w(u) \in [w(v), w(p)] \} = \emptyset.\] If $w(p) < w(v)$, then $v$ is a cyclic bottom-to-top maximum if 
\[ \{u: u <_F v, w(u) \notin [w(p), w(v)] \} = \emptyset.\] Let $\Cbtmax{F,w}$ denote the set of all cyclic bottom-to-top maxima of the labeled forest $(F,w)$.
\end{definition}

Let $F$ be a forest of size $n$ with naturally indexed vertices $\{v_1, v_2, \ldots, v_n\}$.  We will denote by $p_i$ be the parent of $v_i$, and while for a root $v_j$, $p_j$ is not defined, we will  use the convention $w(p_j)=n+1$.  Define the $\mcode$ $(s_1,s_2, \ldots, s_n)$ of $(F,w)$ as follows (see Figure~\ref{fig: mcode}):
\begin{align*}
& m_i = \# \{u : u <_F v_i, w(u) \in [w(v_i), w(p_i)]\}  \text{ if } w(v_i)<w(p_i) \\
& m_i = \# \{u :  u <_F v_i, w(u) \notin [w(p_i), w(v_i)]\} \text{ otherwise}.
\end{align*}
The special case of this code for permutations was used under the name ``McMahon code'' in~\cite{Vajnovszki}, where its relationship to the Lehmer code was explained. 

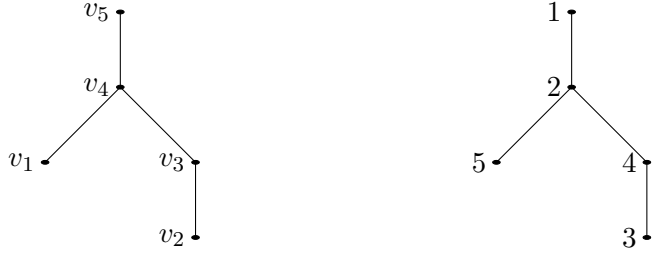
\begin{figure}
\begin{center}
\begin{tikzpicture} [yscale = 0.5]
\begin{scope}
\draw (0,0)--(1,2)--(1,4);
\draw (1,2)--(2,0)--(2,-2);
\draw [fill] (0,0) circle [ radius = 0.05];
\node [left] at (0,0) {$v_1$};
\draw [fill] (1,2) circle [ radius = 0.05];
\node [left] at (1,2) {$v_4$};
\draw [fill] (1,4) circle [ radius = 0.05];
\node [left] at (1,4) {$v_5$};
\draw [fill] (2,0) circle [ radius = 0.05];
\node [left] at (2,0) {$v_3$};
\draw [fill] (2,-2) circle [ radius = 0.05];
\node [left] at (2,-2) {$v_2$};
\end{scope}

\begin{scope} [xshift = 6cm]
\draw (0,0)--(1,2)--(1,4);
\draw (1,2)--(2,0)--(2,-2);
\draw [fill] (0,0) circle [ radius = 0.05];
\node [left] at (0,0) {5};
\draw [fill] (1,2) circle [ radius = 0.05];
\node [left] at (1,2) {2};
\draw [fill] (1,4) circle [ radius = 0.05];
\node [left] at (1,4) {1};
\draw [fill] (2,0) circle [ radius = 0.05];
\node [left] at (2,0) {4};
\draw [fill] (2,-2) circle [ radius = 0.05];
\node [left] at (2,-2) {3};
\end{scope}
\end{tikzpicture}
\caption{For this labeled tree, we have $\Cyc(F,w) = \{v_1, v_2, v_3\}$ and $\mcode(F,w) = (0,0,0,3,4)$. Also, $\Des(F,w) = \{v_1, v_3, v_4\}$ and therefore $\maj(F,w) = 1 + 2 + 4 =7$.}
\label{fig: mcode}
\end{center}
\end{figure}

\begin{theorem} \label{maj}
Let $(m_1, m_2, \dots, m_n)$ be the $\mcode$ of a labeled forest $(F,w)$. Then $\sum _{i=1}^{n} m_i = \maj{(F,w)}$, and $m_i = 0$ if and only if $v_i \in \Cbtmax{(F,w)}$.
\end{theorem}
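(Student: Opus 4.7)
The second assertion is immediate by unwinding definitions: $v_i \in \Cbtmax(F,w)$ exactly when the set whose cardinality defines $m_i$ is empty.

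For the first assertion I will re-express both sides so that they match via a telescoping argument. Introduce for each vertex $v$ the auxiliary quantity
\[
L_v := \#\bigl\{u <_F v : w(u) \text{ lies in the closed interval between } w(v) \text{ and } w(p_v)\bigr\},
\]
where, as usual, $w(p_v) := n+1$ if $v$ is a root. A quick case distinction on whether $v \in \Des(F,w)$ shows that $m_v = L_v$ when $v \notin \Des(F,w)$, and $m_v = (h_v - 1) - L_v$ when $v \in \Des(F,w)$. Substituting and using $\maj(F,w) = \sum_{v \in \Des(F,w)} h_v$ yields
\[
\sum_{i=1}^n m_i - \maj(F,w) = \sum_{v \notin \Des(F,w)} L_v - \sum_{v \in \Des(F,w)} L_v - |\Des(F,w)|,
\]
so the whole theorem reduces to the key identity
\[
\sum_{v \notin \Des(F,w)} L_v - \sum_{v \in \Des(F,w)} L_v = |\Des(F,w)|.
\]

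To prove the key identity I would interchange the order of summation, writing each $L_v$ as a sum over pairs $(u,v)$ with $u <_F v$. Fix $u$ and let $u = u_0 <_F u_1 <_F \cdots <_F u_k$ be the chain from $u$ to the root $u_k$ of its tree, together with the convention $w(u_{k+1}) = n+1$. Setting $\sigma_i := \sgn(w(u_{i+1}) - w(u_i))$, which is $+1$ if $u_i \notin \Des(F,w)$ and $-1$ otherwise, and $I_i := \chi(w(u) \text{ lies between } w(u_i) \text{ and } w(u_{i+1}))$, a short case analysis on the three distinct labels $w(u_i), w(u_{i+1}), w(u)$ yields the telescoping identity
\[
\sigma_i \cdot I_i = \chi\bigl(w(u_{i+1}) > w(u)\bigr) - \chi\bigl(w(u_i) > w(u)\bigr).
\]
Summing over $i = 1, \dots, k$ collapses to $1 - \chi(w(p_u) > w(u)) = \chi(u \in \Des(F,w))$ for non-root $u$, and gives the empty sum $0 = \chi(u \in \Des(F,w))$ for root $u$ (roots are never descents). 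Adding over all $u \in V(F)$ produces exactly $|\Des(F,w)|$, establishing the key identity.

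The only genuinely tricky step is recognizing $\sigma_i$ as the sign of a discrete derivative of $i \mapsto \chi(w(u_i) > w(u))$ so that the sum telescopes; everything else is bookkeeping once one commits to the convention $w(p_v) = n+1$ at the roots.
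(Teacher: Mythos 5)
Your proof is correct, and it takes a genuinely different route from the paper's. The paper establishes $\sum_{i=1}^n m_i = \maj(F,w)$ by induction on $n$: the forest case follows because the $\mcode$ concatenates over components, and the tree case deletes the top edge $(v_{n-1},v_n)$ and tracks how only the entries $m_{n-1}$ and $m_n$ change, in two cases according to whether $w(v_{n-1})<w(v_n)$ or not. You instead give a direct, induction-free double count: after observing that $m_v = L_v$ if $v\notin\Des(F,w)$ and $m_v = (h_v-1)-L_v$ if $v\in\Des(F,w)$, the theorem reduces to the identity $\sum_{v\notin\Des(F,w)}L_v - \sum_{v\in\Des(F,w)}L_v = |\Des(F,w)|$, which you prove by fixing $u$, summing over the chain $u=u_0<_F u_1<_F\cdots<_F u_k$ above it, and telescoping $\sigma_i I_i = \chi\bigl(w(u_{i+1})>w(u)\bigr)-\chi\bigl(w(u_i)>w(u)\bigr)$; I verified the six-case check of this pointwise identity, and the boundary term $\chi\bigl(w(u_{k+1})>w(u)\bigr)=1$ coming from the convention $w(p)=n+1$ at the root is exactly what makes each fixed $u$ contribute $\chi\bigl(u\in\Des(F,w)\bigr)$. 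Your argument has the advantage of exhibiting precisely how each pair $u<_F v$ participates and of avoiding induction altogether; the paper's edge-deletion induction is shorter to write, matches the recursive style used for the $\bcode$, and is the step that carries over verbatim to the signed analogue in Theorem~\ref{thm: fmaj}. Both proofs are complete; the second assertion ($m_i=0$ iff $v_i\in\Cbtmax(F,w)$) is, as you and the paper both note, immediate from the definitions.
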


\begin{proof}
The second part follows directly from the definitions. 

For the first part, assume  that the statement holds for all forests of size less than $n$. Suppose first that $F$ is a forest of size $n$ with trees $T_1, T_2, \ldots, T_k$ for $k>1$.   It is clear that $\maj{(P,w)} = \sum _{i=1}^{n} \maj{(T_i,w_i)} $, where $w_i$ is the labeling of $T_i$ induced by $w$.  Also, the $\mcode$ $(m_1, m_2, \dots, m_n)$ of $(F,w)$ is just a concatenation of the M-codes of the labeled trees $(T_1, w_1),(T_2, w_2),\dots,(T_k,w_k)$, with reordering as necessary.  Therefore $\sum _{i=1}^{n} m_i = \maj{(F,w)}$. 

Now suppose $k=1$.  Note that $v_{n-1}$ is a child of $v_n$. Let $F'$ be the forest obtained by deleting the edge $(v_{n-1}, v_n)$ from $F$. Denote by $(m'_1, m'_2, \dots, m'_{n})$  the $\mcode$ fof $(F', w)$. Let $A = \{u : u <_F v_{n-1} \text{ and } w(u)<w(v_{n-1})\}$, $B= \{u : u <_F v_{n-1} \text{ and } w(v_{n-1}) <  w(u) < w(v_n) \}$, and $C = \{u  : u <_F v_{n-1} \text{ and } w(v_n)<w(u)\}$.  We will consider two cases.\\
\textbf{Case 1.} $w(v_{n-1})<w(v_n)$\\ 
In this case $\Des(F,w) = \Des(F', w)$ and hence $\maj(F,w) = \maj(F', w)$. Comparing the two M-codes,  we have  $m'_{n-1} = m_{n-1} + \#C$, $m'_n = m_n - \#C$, and $m'_i = m_i$ for all $i \neq n-1,n$. Therefore,
\[ \sum_{i=1}^n m_i= \sum_{i=1}^n m'_i = \maj(F', w) = \maj(F, w) .\]
\textbf{Case 2.} $w(v_{n-1})>w(v_n)$\\ 
In this case $\Des(F,w) = \Des(F', w) \cup \{v_{n-1}\}$ and hence $\maj(F,w) = \maj(F', w) + h_{v_{n-1}}$. Comparing the two MacMahon codes,  we have $m'_{n-1} =h_{v_{n-1}} - 1- \#A$, $m'_n = m_n - 1- \#C$, and $m'_i = m_i$ for all $i \neq n-1,n$. For the code of $(F,w)$, we notice that $m_{n-1} = \#\{u : u <_F v_{n-1} \text{ and } w(u)> w(v_{n-1})\} + \# \{u : u <_F v_{n-1} \text{ and } w(u)<w(v_{n})\} = (h_{v_{n-1}} - 1 - \#A) + (h_{v_{n-1}} - 1- \#C)$. Therefore, 
\[ \sum_{i=1}^n m_i= \sum_{i=1}^n m'_i + h_{v_{n-1}} = \maj(F', w) + h_{v_{n-1}} = \maj(F, w) .\]
\end{proof}

The $\mcode$ induces a map $\theta$ from $\mathcal{W}(F)$ to the set of pairs $(w', (m_1, \dots, m_n))$, where $w'$ is a natural labeling of $F$ and $(m_1, \dots, m_n) \in \mathrm{SE}_F$ defined in the following way. For $w \in \mathcal{W}(F)$, the corresponding subexcedent sequence $(m_1, \dots, m_n)$ is its $\mcode$. The natural labeling $w'$ is obtained in $n$ steps by sorting $w$ as follows. Start with $i=n$. Let $\ell_1 < \cdots \ell_{h_{v_i}}$ be the labels of the subtree rooted at $v_i$. Replace each label $\ell_j$ by $\ell_{j+m_i}$, where the addition is performed modulo $h_{v_i}$. Note that after this, the vertex $v_i$ is a cyclic bottom-to-top maximum in the new labeling, while the other cyclic bottom-to-top maxima remain unchanged. It is not difficult to see that the $\mcode$ of the new labeling is $(m_1, \dots, m_{i-1}, 0 \dots, 0)$. Decrease $i$ by 1 and repeat until $i=0$. This will produce a labeling $w'$ with $\mcode$ $(0, \dots, 0)$ so it is natural. Because of this discussion, it is not difficult to see that the steps are reversible and therefore $\theta$ is a bijection. We summarize this in the following theorem.

\begin{theorem}\label{theta map}  The map \[\theta : \mathcal{W}(F)  \rightarrow \{(w', (m_1, \dots, m_n)) : w' \in \mathcal{W}(F) \text{ is a natural labeling } \text{ and } (m_1, \dots, m_n) \in \mathrm{SE}_F \} \] described above is a bijection.
\end{theorem}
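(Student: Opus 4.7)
The plan is to verify the construction of $\theta$ described in the paragraph preceding the theorem and to exhibit an explicit inverse. Well-definedness of the output is immediate in the subexcedent coordinate: by the definition of the $\mcode$, $m_i$ counts a subset of the $h_{v_i}-1$ vertices strictly below $v_i$, so $(m_1,\dots,m_n) \in \mathrm{SE}_F$. The whole proof then rests on the invariant asserted in the construction: if $w^{(i)}$ denotes the labeling obtained after processing index $i+1$ (so $w^{(n)} = w$), then $\mcode(F, w^{(i)}) = (m_1,\dots,m_i,0,\dots,0)$. Iterating from $i = n$ down to $i = 0$ shows that $w' := w^{(0)}$ has all-zero $\mcode$, and a short induction from the roots downward, using Theorem~\ref{maj}, then implies $w'$ is natural: if $v$ is a cyclic bottom-to-top maximum and the label of its parent is the maximum in the subtree containing $v$, then $w(v)$ must be the maximum in its own subtree.

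Proving the invariant reduces to two claims about the cyclic shift at $v_i$. First, after the shift, $v_i$ is itself a cyclic bottom-to-top maximum of the new labeling: writing the subtree labels as $\ell_1 < \cdots < \ell_{h_{v_i}}$ and $w(v_i) = \ell_{k_0}$, a two-case analysis ($w(v_i) < w(p_i)$ versus $w(v_i) > w(p_i)$) shows that the shift by $m_i$ sends $w(v_i)$ to the largest subtree label strictly smaller than $w(p_i)$, or to $\ell_{h_{v_i}}$ if no such label exists, which is precisely what the cyclic bottom-to-top maximum condition requires. Second, $m_j$ is unchanged for $j \neq i$: if $v_j$ is not a descendant of $v_i$, no label involved in the computation of $m_j$ is touched; if $v_j <_F v_i$, then $w(v_j)$, $w(p_j)$, and the labels of descendants of $v_j$ all lie in the subtree of $v_i$ and are rotated by the same cyclic amount. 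The key observation is that, irrespective of the sign of $w(v_j) - w(p_j)$, the value $m_j$ admits the unified description as the number of descendants of $v_j$ whose label lies in the open cyclic arc from $w(v_j)$ to $w(p_j)$ along the circular order induced on the $h_{v_i}$ subtree-labels; a rigid rotation of that circular order preserves cyclic arcs and their contents, so the count is invariant.

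Granting the invariant, the inverse map is the reverse sort: given $(w', (m_1,\dots,m_n))$, set $u^{(0)} = w'$ and for $i = 1,\dots,n$ cyclically shift the labels of the subtree at $v_i$ by $-m_i$ to produce $u^{(i)}$; then $w := u^{(n)}$ is the unique preimage of $(w',(m_1,\dots,m_n))$ under $\theta$, since the shift by $-m_i$ uniquely reverses the shift by $m_i$ at stage $i$. A cardinality check, $|\mathcal{W}(F)| = n! = (n!/\prod_v h_v) \cdot \prod_v h_v$, matches the size of the codomain and provides a sanity check on the argument. The main obstacle is the unified cyclic-arc reinterpretation of $m_j$ in the second claim above: fusing the two branches $w(v_j) < w(p_j)$ and $w(v_j) > w(p_j)$ of the $\mcode$ definition into a single rotation-invariant quantity is the key combinatorial step, and once it is justified by a direct comparison of the two interval counts, the invariance under the shift at $v_i$ becomes transparent and the rest of the argument is routine bookkeeping.
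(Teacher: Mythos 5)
Your proposal is correct and follows essentially the same route as the paper, which simply describes the map $\theta$ and asserts the same invariant (that the cyclic shift at $v_i$ zeroes out $m_i$ while leaving the other entries of the $\mcode$ unchanged, with reversibility giving the inverse); your unified cyclic-arc description of $m_j$ is precisely the justification behind the paper's ``it is not difficult to see.'' One small imprecision: when $v_j$ is an \emph{ancestor} of $v_i$, the labels entering the computation of $m_j$ are in fact touched by the rotation, but since they are merely permuted among descendants of $v_j$ while $w(v_j)$ and $w(p_j)$ are fixed, the count $m_j$ is still unchanged, so the argument stands.
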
 

As a corollary to Theorem~\ref{theta map} and Theorem~\ref{maj} we get the following result.

\begin{theorem}
 \begin{equation}
\sum_{w \in \mathcal{W}(F)} q^{\maj(P,w)} \prod_{v \in \Cbtmax(F,w)} t_v = \frac {n!}{\prod_{v \in V(F)} h_{v} } \prod _{v \in V(F)} \left([h_{v}] - 1 + t_{v}\right),
\end{equation}
\end{theorem}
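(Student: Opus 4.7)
The plan is to obtain this identity as a direct corollary of the bijection $\theta$ from Theorem~\ref{theta map} combined with the statistical translation from Theorem~\ref{maj}, exactly in the same style as the derivation of Corollary~\ref{cor: sorB,CycB} from the bijection $\psi$. The key observation is that the factorization of the right-hand side already hints at what is being counted: the prefactor $n!/\prod_{v\in V(F)} h_v$ is the classical hook length count for the number of natural labelings of $F$, while each local factor $[h_v]-1+t_v = t_v + q + q^2 + \cdots + q^{h_v-1}$ is precisely the generating function for one coordinate $m_i$ of a sequence in $\mathrm{SE}_F$, with the entry $0$ marked by $t_{v_i}$ and each entry $k\ge 1$ marked by $q^k$.

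First I would use $\theta$ to rewrite the left-hand side as a sum indexed by pairs $(w',(m_1,\dots,m_n))$ of a natural labeling and a sequence in $\mathrm{SE}_F$. By Theorem~\ref{maj}, $q^{\maj(F,w)}$ becomes $q^{m_1+\cdots+m_n}$ and $\prod_{v\in\Cbtmax(F,w)} t_v$ becomes $\prod_{i:\,m_i=0} t_{v_i}$, since cyclic bottom-to-top maxima correspond exactly to zero entries of the $\mcode$. Neither of these translated statistics depends on $w'$, so the summation factors cleanly:
\[
\sum_{w\in\mathcal{W}(F)} q^{\maj(F,w)}\prod_{v\in \Cbtmax(F,w)} t_v = \Bigl(\#\{\text{natural }w'\}\Bigr)\cdot\sum_{(m_1,\dots,m_n)\in\mathrm{SE}_F} q^{\sum_i m_i}\prod_{i:\,m_i=0} t_{v_i}.
\]

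The first factor equals $n!/\prod_{v\in V(F)} h_v$ by the standard hook length formula for the number of linear extensions of a forest poset, and the second factor splits across coordinates as $\prod_{v\in V(F)}\bigl(t_v + q + q^2 + \cdots + q^{h_v-1}\bigr) = \prod_{v\in V(F)}\bigl([h_v]-1+t_v\bigr)$. Assembling the two factors yields the claimed identity. I do not expect any genuine obstacle here: the real work was carried out in Theorem~\ref{maj} (identifying $\maj$ and $\Cbtmax$ from the $\mcode$) and Theorem~\ref{theta map} (the bijectivity of $\theta$), so this last step is a routine bookkeeping argument — the only thing to double-check is that in the sorting procedure defining $\theta$ the $\mcode$ indeed ranges freely over $\mathrm{SE}_F$ independently of the chosen natural labeling $w'$, which follows because the reverse procedure can be applied to any natural $w'$ and any $(m_1,\dots,m_n)\in\mathrm{SE}_F$.
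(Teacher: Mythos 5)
Your proposal is correct and follows essentially the same route as the paper: the result is obtained there as a direct corollary of Theorem~\ref{theta map} (the bijection $\theta$ onto pairs of a natural labeling and a sequence in $\mathrm{SE}_F$) together with Theorem~\ref{maj}, with the factor $n!/\prod_{v\in V(F)} h_v$ counting natural labelings and each coordinate contributing $[h_v]-1+t_v$. Your extra remark that the $\mcode$ ranges freely over $\mathrm{SE}_F$ independently of $w'$ is exactly the content of the bijectivity of $\theta$, so nothing is missing.
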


Liang and Wachs~\cite{LW}, constructed a bijection on labeled forests to prove that the enumerator for the inversion index on labeled forests is identical to the enumerator for the major index on labeled forests. For the symmetric group their bijection reduces to a map similar to Foata's second fundamental transformation. Note that as a consequence of the properties of the $\acode$ and $\bcode$ for labeled forests, the map $\theta^{-1}\circ \phi: \mathcal{W}(F) \rightarrow \mathcal{W}(F)$ has the stronger property: it takes $(\inv, \Btmax)$ to $(\maj, \Cbtmax)$. This map is different from the one in~\cite{LW}.

In~\cite{CGG}, Chen, Gao, and Guo defined two major indices for signed labeled forests and showed that they are equidistributed with $\invb$. The first one is based on the flag major index for signed permutations introduced by Adin and Roichman~\cite{AR}. The second one is based on a mahonian statistic for signed permutations that implicitly appears in~\cite{Reiner}. 

\begin{definition}[\cite{CGG}]
For a signed labeled forest $(F, w)$, 
\[ \fmaj(F,w) = 2 \maj(F,w) + n_1(F,w).\]
\end{definition} 

For a signed forest $(F,w)$ let \[ \Desb(F,w) = \Des(F,w) \cup \{ u \in F : u \text{ is a root of } F \text{ with a positive label} \}\] and \[\majb{(F,w)} = \sum _{u \in \Desb(F,w)} h_u.\]  Let $\p(F,w)$ be the number of positive labels of $w$. 

\begin{definition} [\cite{CGG}]
For a signed labeled forest $(F,w)$, \[\rmaj(F,w) = 2\majb(F,w) - \p(F,w).\]
\end{definition}

As observed in~\cite{CGG}, there is a simple map that sends $\fmaj$ to $\rmaj$, so here we will discuss only finding a Stirling partner for $\fmaj$. One could define a generalization of $\Cbtmax$ for signed labelings as follows.

\begin{definition}
Let $(F,w)$ be a signed labeled forest of size $n$. A vertex $v$ is a \emph{cyclic bottom-to-top maximum} if its label is positive and is a bottom-to-top maximum with respect to the cyclic shift of the natural ordering of the integers $-n, \dots, -1, 1, \dots, n$  beginning with the label of the parent of $v$, $p$. Precisely, for a vertex $v$ with a positive label, if $w(v) < w(p)$, then $v$ is a cyclic bottom-to-top maximum if 
\[ \{u: u <_F v, w(u) \in [w(v), w(p)] \} = \emptyset.\] If $w(p) < w(v)$, then $v$ is a cyclic bottom-to-top maximum if 
\[ \{u: u <_F v, w(u) \notin [w(p), w(u)] \} = \emptyset.\] Let $\Cbtmaxb{F,w}$ denote the set of all cyclic bottom-to-top maxima of the signed labeled forest $(F,w)$.
\end{definition}

Unfortunately, the pairs $(\fmaj, \#\Cbtmax)$ and $(\invb, \#\Btmax)$ are not equidistributed over $\mathcal{W}_B(F)$.   It would be interesting to see if there is a better definition of $\Cbtmax(F,w)$ or if there is another natural Stirling partner for $\fmaj$. Here we will only show that there is an analog of Theorem~\ref{maj} for signed forests. 
 
For a signed labeled forest $(F,w)$ with naturally indexed vertices  $\{v_1, v_2, \ldots, v_n\}$ we define its signed $\mcode$ to be the sequence $(m_1, \dots, m_n)$ given by 
\begin{align*}
&m_i = 2 \#\{u :  u <_F v_i, w(u) \in [w(v_i), w(p_i)]\} + \chi (w(v_i)<0)  \;\;\; \text{ if } w(v_i)<w(p_i)\\
&m_i = 2 \# \{u :  u <_F v_i, w(u) \notin [w(p_i), w(v_i)]\} + \chi (w(v_i)<0)  \;\;\; \text{ otherwise}.
\end{align*}
Here we use the same convention that $p_i$ is the parent of $v_i$ and if $v_i$ is a root of $F$, then $w(p_i)=n+1$. 

\begin{theorem} \label{thm: fmaj}
For a forest $F$ with signed labeling $w$ and $\mcode$ $(m_1, m_2, \dots, m_n)$, $\sum _{i=1}^{n} m_i = \fmaj{(P,w)}$, and $m_i = 0$ if and only if $v_i \in \Cbtmaxb(F,w)$.
\end{theorem}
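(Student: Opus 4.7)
My plan is to prove the theorem by reducing it to the unsigned version, Theorem~\ref{maj}, via a rank-standardization argument. The starting observation is that the defining formula admits the decomposition $m_i = 2 c_i + \chi(w(v_i) < 0)$, where $c_i$ is the interval-count part, with the appropriate branch selected according to whether $w(v_i) < w(p_i)$. Summing gives
\[
\sum_{i=1}^n m_i \;=\; 2 \sum_{i=1}^n c_i \;+\; \none(F, w),
\]
so in view of $\fmaj(F, w) = 2 \maj(F, w) + \none(F, w)$, the first part of the theorem reduces to the identity $\sum_i c_i = \maj(F, w)$. The second part is then immediate: $m_i = 0$ forces both $c_i = 0$ and $w(v_i) > 0$, and together these conditions are exactly the definition of $v_i \in \Cbtmaxb(F, w)$.

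For the identity $\sum_i c_i = \maj(F, w)$, I would define the \emph{standardization} $\tilde{w} \in \mathcal{W}(F)$ of $w$: replace each signed label by its rank within $w(V(F))$ under the natural order on $\{-n, \dots, -1, 1, \dots, n\}$. Standardization preserves every pairwise comparison between labels, and the root convention $w(p_j) = n+1$ continues to dominate every label in $\tilde{w}$ as well. Consequently, at each vertex the same branch of the $\mcode$ formula applies, the count $c_i$ equals the $i$-th entry $\tilde{m}_i$ of the (unsigned) $\mcode$ of $(F, \tilde{w})$, and $\Des(F, w) = \Des(F, \tilde{w})$, yielding $\maj(F, w) = \maj(F, \tilde{w})$. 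Theorem~\ref{maj} applied to $(F, \tilde{w})$ then gives $\sum_i c_i = \sum_i \tilde{m}_i = \maj(F, \tilde{w}) = \maj(F, w)$, which completes the proof.

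Since the whole argument is a one-step reduction, there is no real obstacle; the only thing to check carefully is that standardization preserves both the interval counts $c_i$ and the descent set, but this is automatic because the definitions only involve comparisons of label values against one another and against the convention value $n+1$. A less efficient alternative would be to imitate the inductive proof of Theorem~\ref{maj} directly, deleting the edge $(v_{n-1}, v_n)$ and tracking the extra $\chi(w(v_i) < 0)$ contributions across the two cases $w(v_{n-1}) \lessgtr w(v_n)$, but the standardization route avoids this bookkeeping entirely.
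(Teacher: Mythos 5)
Your proof is correct, but it takes a genuinely different route from the paper. The paper proves Theorem~\ref{thm: fmaj} by repeating the inductive argument of Theorem~\ref{maj} from scratch: it deletes the edge $(v_{n-1},v_n)$, splits into the cases $w(v_{n-1})<w(v_n)$ and $w(v_{n-1})>w(v_n)$, and tracks how the signed $\mcode$ entries change, now with factors of $2$ and the extra $\chi(w(v_i)<0)$ terms carried along. You instead isolate the identity $\sum_i m_i = 2\sum_i c_i + \none(F,w)$ directly from the definition and reduce the remaining claim $\sum_i c_i = \maj(F,w)$ to the already-proved unsigned Theorem~\ref{maj} via order-preserving standardization of the labels. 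The key observations you need --- that standardization preserves every pairwise comparison, hence preserves the branch selection, the interval counts $c_i$, and $\Des(F,w)$, and that the root convention $w(p_j)=n+1$ still dominates all labels --- all check out; membership in the intervals $[w(v_i),w(p_i)]$ is only ever tested against labels that actually occur, so rank-replacement changes nothing. Your second part also matches the paper's (both treat it as immediate from the definitions). What your approach buys is the elimination of the duplicated case analysis and a cleaner conceptual explanation of why the signed statement is just the unsigned one plus a count of negative signs; what the paper's approach buys is self-containedness and uniformity with the proof style used throughout Sections~\ref{S: sor} and~\ref{S: maj}. One cosmetic caution: the paper's definition of $\Cbtmaxb$ contains a typo ($[w(p),w(u)]$ should read $[w(p),w(v)]$), and your reading, which matches the signed $\mcode$ definition, is clearly the intended one.
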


\begin{proof}
It is clear from the definitions that $m_i = 0$ if and only if $v_i \in \Cbtmaxb{(F,w)}$.

For the first part, we use induction on $n$, the number of vertices of $F$. If $F$ is a forest, the claim follows the same way as in the unsigned case (Theorem~\ref{maj}). Therefore, suppose that $F$ is a tree. Then $v_n$ is the root of $F$. Consider the child of $v_{n}$, $v_{n-1}$, and let $F'$ be the forest obtained from deleting the edge $(v_{n-1}, v_n)$ from $F$. Let $(m'_1, \dots, m'_n)$ be the signed MacMahon code of $F'$. We will use the sets $A = \{u \in F : u <_F v_{n-1} \text{ and } w(u)<w(v_{n-1})\}$, $B= \{u \in F : u <_F v_{n-1} \text{ and } w(v_{n-1}) < w(u) < w(v_n) \}$, and $C = \{u \in F : u <_F v_{n-1} \text{ and } w(v_n)<w(u)\}$. \\
\textbf{Case 1.} $w(v_{n-1})< w(v_n)$\\
In this case, $v_{n-1} \notin \Des(F,w)$ and therefore $\maj(F,w) = \maj(F',w)$. So, $\fmaj(F,w) = \fmaj(F',w)$.  Note that $m'_{n-1} = m_{n-1} + 2\#C$, $m'_n = m_n - 2\#C$, and $m'_j = m_j$ for all $j \neq n-1, n$.  Thus 
\[ \sum_{i=1}^n m_i= \sum_{i=1}^n m'_i  = \maj(F', w)  = \maj(F, w) .\]
\textbf{Case 2.} $w(v_{n-1})> w(v_n)$\\
In this case, $ \Des(F,w) = \Des(F',w) \cup \{v_{n-1}\}$ and therefore $\maj(F,w) = \maj(F',w) + h_{v_{n-1}}$. This implies $\fmaj(F,w) = \fmaj(F',w) + 2h_{v_{n-1}}$. 
Note that $m'_{n-1} = m_{n-1} - 2(h_{v_{n-1}} - 1- \#C)$, $m'_n = m_n - 2- 2\#C$, and $m'_j = m_j$ for all $j \neq n-1, n$.  Thus 
\[ \sum_{i=1}^n m_i= \sum_{i=1}^n m'_i  + 2h_{v_{n-1}}= \maj(F', w) + 2h_{v_{n-1}} = \maj(F, w) .\]
\end{proof}

The difference between the signed and the unsigned case is that the map from $\mathcal{W}_B(F) \rightarrow \mathrm{SE}_F^B$ given by the $\mcode$ is not onto.

\end{document}